\DeclareMathDelimiter{\rlb}{\mathclose}{stmry}{"4A}{stmry}{"71}
\DeclareMathDelimiter{\lrb}{\mathopen}{stmry}{"4B}{stmry}{"79}
\DeclareMathOperator{\Var}{Var}
\def\Eins{\mathbh{1}}
\newtheorem{thm}{Theorem}
\newtheorem{lemma}{Lemma}
\newtheorem{cor}{Corollary}
\newtheorem{prop}{Proposition}
\theoremstyle{definition}
\newtheorem{example}{Example}
\newtheorem{remark}{Remark}
\newenvironment{defenum}{\begin{enumerate}}{\end{enumerate}}
\def\index#1{}
\newenvironment{defenum}{\begin{description}}{\end{description}}
\begin{document}

\begin{frontmatter}
\pretitle{Research Article}

\title{Single jump filtrations and local martingales}

\author[a,b]{\inits{A. A.}\fnms{Alexander A.}~\snm{Gushchin}\ead[label=e1]{gushchin@mi-ras.ru}\orcid{0000-0002-0020-7496}}
\address[a]{\institution{Steklov Mathematical Institute}, Gubkina 8, 119991 Moscow, \cny{Russia}}
\address[b]{\institution{National Research University Higher School of Economics},\break  Pokrovsky Boulevard 11, 109028 Moscow, \cny{Russia}}



\begin{abstract}
A single jump filtration
$({\mathscr{F}}_{t})_{t\in \mathbb{R}_{+}}$ generated by a random variable
$\gamma $ with values in $\overline{\mathbb{R}}_{+}$ on a probability
space $(\Omega ,{\mathscr{F}},\mathsf{P})$ is defined as follows: a set
$A\in {\mathscr{F}}$ belongs to ${\mathscr{F}}_{t}$ if
$A\cap \{\gamma >t\}$ is either $\varnothing $ or $\{\gamma >t\}$. A process $M$ is proved to be a local martingale with respect to this filtration
if and only if it has a representation
$M_{t} = F(t){\Eins }_{\{t<\gamma \}} + L{\Eins }_{\{t
\geqslant \gamma \}}$, where $F$ is a deterministic function and $L$ is
a random variable such that $\mathsf{E}|M_{t}|<\infty $ and
$\mathsf{E}(M_{t})=\mathsf{E}(M_{0})$ for every
$t\in \{t\in \mathbb{R}_{+}\colon {\mathsf{P}}(\gamma \geqslant t)>0\}$. This
result seems to be new even in a special case that has been studied in
the literature, namely, where ${\mathscr{F}}$ is the smallest
$\sigma $-field with respect to which $\gamma $ is measurable (and then
the filtration is the smallest one with respect to which $\gamma $ is a
stopping time). As a \xch{consequence,}{consequence.} a full description of all local
martingales is given and they are classified according to their global behaviour.
\end{abstract}
\begin{keywords}
\kwd{Filtration}
\kwd{local martingale}
\kwd{processes with finite variation}
\kwd{$\sigma $-martingale}
\kwd{stopping time}
\end{keywords}
\begin{keywords}[MSC2010]%
\kwd{60G44}
\kwd{60G07}
\end{keywords}

\received{\sday{24} \smonth{9} \syear{2019}}
\revised{\sday{30} \smonth{4} \syear{2020}}
\accepted{\sday{1} \smonth{5} \syear{2020}}
\publishedonline{\sday{25} \smonth{5} \syear{2020}}

\end{frontmatter}
%

\section{Introduction}
\label{sec:1}

Starting with Dellacherie \cite{Dellacherie:70}, the following simple model
has been studied and intensively used in applications. Given a random variable
$\gamma $ with positive values on a probability space
$(\Omega ,{\mathscr{F}},\mathsf{P})$,\index{probability space} one considers the smallest filtration\index{filtration}
with respect to which $\gamma $ is a stopping time\index{stopping time} (or, equivalently, the
process ${\Eins }_{\{t\geqslant \gamma \}}$ is adapted). In particular,
Dellacherie gives a formula for the compensator\index{compensator} of this single jump process
${\Eins }_{\{t\geqslant \gamma \}}$.\index{single jump ! process} Chou and Meyer
\cite{ChouMeyer:1975} describe all local martingales\index{local martingale} with respect to this
filtration\index{filtration} and prove a martingale representation theorem. A significant
contribution is done in a recent paper by Herdegen and Herrmann
\cite{HerdegenHerrmann:16}, where a classification, whether a local martingale\index{local martingale}
in this model is a strict local martingale,\index{local martingale} or a uniformly integrable martingale,
etc., is given. Let us also mention some related papers
\cite{BoelVaraiyaWong,jacod1975multivariate,Jacod1976,Davis:1976,Elliott:1976,Neveu:1977,He:1983}, where, in particular,
local martingales\index{local martingale} with respect to the filtrations generated by jump processes
or measures of certain kind are studied.

Let us clarify that in the above model every local martingale\index{local martingale} has the form
%
\begin{equation}
\label{repr}
M_{t} = F(t){\Eins }_{\{t<\gamma \}} + H(\gamma ){\Eins }_{\{t
\geqslant \gamma \}},
\end{equation}
or
\begin{equation*}
M_{t} = F(t\wedge \gamma ) - K(\gamma ){\Eins }_{\{t\geqslant
\gamma \}},
\end{equation*}
where $\gamma $ is a random variable with values in, say,
$(0,+\infty )$, $F$, $H$, and $K=F-H$ are deterministic functions. Denote
by $G$ the distribution function of $\gamma $,
${\overline{G}}(t)=1-G(t)$, $t_{G}=\sup \{t\colon G(t)<1\}$ is the right
endpoint of the distribution of $\gamma $. Assume that
$\mathsf{E}|M_{t}|<\infty $, then
\begin{equation*}
{\mathsf{E}}(M_{t}) = F(t){\overline{G}}(t) + \int _{[0,t]}H(s)\,dG(s),
\end{equation*}
where the corresponding Lebesgue--Stieltjes integral is finite. If
$(M_{t})$ is a martingale,\index{martingale} then $\mathsf{E}(M_{t})=\mathsf{E}(M_{0})$, and this
equality can be written as
%
\begin{equation}
\label{e:fe}
F(t){\overline{G}}(t) + \int _{[0,t]}H(s)\,dG(s) = F(0)
\end{equation}
and can be viewed as a functional equation concerning one of functions
in $(F,G,H)$ or $(F,G,K)$, where other two functions are assumed to be
given. In fact, this equation takes place for $t<t_{G}$ or
$t\leqslant t_{G}$, the latter in the case where $t_{G}<\infty $ and
$\mathsf{P}(\gamma =t_{G})>0$. Moreover, it turns out that this is not only
the necessary condition but also the sufficient one for
$(M_{t})_{t\in \mathbb{R}_{+}}$ given by \eqref{repr} to be a local martingale.\index{local martingale}
This consideration allows us to reduce problems to solving this functional
equation. For example, to find the compensator\index{compensator} $F(t\wedge \gamma )$ of
${\Eins }_{\{t\geqslant \gamma \}}$ as in \cite{Dellacherie:70} one
needs to find a solution $F$ given $G$ and $K\equiv 1$. A possible way
to explain the idea in \cite{ChouMeyer:1975} is the following: The terminal
value $M_{\infty }$\index{terminal value} of any local martingale\index{local martingale} $M$ in this model is represented
as $H(\gamma )$, and to find a representation \eqref{repr} for $M$ it is
enough to solve the equation for $F$ given $G$ and $H$; the linear dependence
between $H$ and $F$ results in a representation theorem. Contrariwise,
in \cite{HerdegenHerrmann:16} the authors suggest to find $H$ from the
equation for given $F$ and $G$. This allows them to study global properties
of $M$.

In this paper we consider a more general model, where all randomness appears
``at time $\gamma $'' but it may contain much more information than
$\gamma $ does. We start with a random variable $\gamma $ on a probability
space $(\Omega ,{\mathscr{F}},\mathsf{P})$,\index{probability space} and define a single jump filtration
$({\mathscr{F}}_{t})$\index{single jump ! filtration} in such way that nothing happens strictly before
$\gamma $, $\gamma $ is a stopping time\index{stopping time} with respect to it, and the
$\sigma $-field ${\mathscr{F}}_{\gamma }$ of events that occur before or
at time $\gamma $ coincides with ${\mathscr{F}}$ (in fact, on the set
$\{\gamma <\infty \}$). We prove that every local martingale\index{local martingale} with respect
to this filtration\index{filtration} has the representation
%
\begin{equation}
\label{repr2}
M_{t} = F(t){\Eins }_{\{t<\gamma \}} + L{\Eins }_{\{t
\geqslant \gamma \}},
\end{equation}
where now $L$ is a random variable which is not necessarily a function
of $\gamma $. However, denoting $H(t)=\mathsf{E}[L|\gamma =t]$, we come to
the same functional equation of type \eqref{e:fe}.

Some results of the paper can be deduced from known results for marked
point processes, at least if ${\mathscr{F}}$ is countably generated; this applies, for
example, to Theorem~\ref{th:incr} about the compensator\index{compensator} of a single jump process.
Another example is Corollary~\ref{co:decomp} which says that every local
martingale\index{local martingale} is the sum of a local martingale\index{local martingale} of form \eqref{repr} and an
``orthogonal'' local martingale,\index{local martingale} the latter being characterised, essentially,
by the property $F(t)\equiv 0$. The reader can recognize in this decomposition
the representation of a local martingale\index{local martingale} as the sum of two stochastic integrals
with respect to random measures, see \cite{Jacod1976} and
\cite{Jacod1979}. However, our direct proofs are simpler due to the key
feature of our paper. Namely, we obtain a simple necessary and sufficient
condition for a process to be a local martingale\index{local martingale} and later exploit it.
A description of \emph{all\/} local martingales\index{local martingale} via a full description of
\emph{all\/} possible solutions to a functional equation of type \eqref{e:fe} is a simple consequence of this necessary and sufficient condition.
In particular, an absolute continuity type property of $F$ with respect
to $G$, considered as an assumption in \cite{HerdegenHerrmann:16}, is proved
to be a necessary condition. An elementary analysis of a functional equation
of type \eqref{e:fe} shows that, if $\gamma $ has no atom at its right
endpoint, there are different $F$ satisfying the equation for given
$H$ and $G$. In particular, there is a local martingale\index{local martingale} $M$ such that
$M_{0}=1$ and $M_{\infty }=0$; $M$ is necessarily a closed supermartingale.\index{supermartingale}\looseness=-1

Another important feature of our model, in contrast to Dellacherie's model,
is that it admits $\sigma $-martingales which are not local martingales.\index{local martingale}

Let us also mention some other papers where processes of form \eqref{repr} or \eqref{repr2} are considered. Processes of form \eqref{repr} with $t_{G}=\infty $ are typical in the modelling of credit
risk, see, e.g., \cite{JeanblancRutkowski:2000} and
\cite[Chapter~7]{JeanblancYorChesney:2009}, where usually $F$ is expressed
via $G$ and one needs to find $H$. Since $t_{G}=\infty $, such a process
is a martingale.\index{martingale} For example, in the simplest case
$F=1/{\overline{G}}$ and hence $H=0$. This process is the same that is mentioned
in two paragraphs above. Single jump filtrations\index{single jump ! filtration} and processes of form \eqref{repr2} appear in \cite{Gushchin:18} and \cite{Gushchin:20}. It is
interesting to note that, in \cite{Gushchin:20}, the random ``time''
$\gamma $ is, in fact, the global maximum of a random process, say, a convergent
continuous local martingale.\index{local martingale}

Section~\ref{sec:2} contains our main results. In Theorem~\ref{th:2} we
establish a necessary and sufficient condition for a process of type~\eqref{repr2}
to be a local martingale.\index{local martingale} This allows us to obtain a full description of
all local martingales\index{local martingale} through a functional equation of type~\eqref{e:fe}
in Theorem~\ref{th:solutions}. A similar description is available for
$\sigma $-martingales, see Theorem~\ref{th:sigmamart}. Finally, Theorem~\ref{th:class}
classifies local martingales\index{local martingale} in accordance with their global behaviour
up to $\infty $. Section~\ref{sec:4} contains the proofs of these results.
In Section~\ref{sec:5} we consider complementary questions. Namely, we
find the compensator\index{compensator} of a single jump process. We also consider submartingales\index{submartingale}
of class $(\Sigma )$, see \cite{Nikeghbali:06}, and show that their transformation
via a change of time leads to processes of type~\eqref{repr2}. As a consequence,
we reprove 
Theorem 4.1 of \cite{Nikeghbali:06}.

We use the following notation: $\mathbb{R}_{+}=[0,+\infty )$,
$\overline{\mathbb{R}}_{+}=[0,+\infty ]$,
$a\wedge b = \min \,\{a,b\}$. The arrows $\uparrow $ and
$\downarrow $ indicate monotone convergence, while
$\lim _{s\upuparrows t}$ stands for $\lim _{s\to t,s<t}$.

A real-valued function $Z(t)$ defined at least for $t\in [0,s)$ is called
c\`{a}dl\`{a}g\index{c\`{a}dl\`{a}g} on $[0,s)$ if it is right-continuous at every $t\in [0,s)$ and
has a finite left-hand limit at every $t\in (0,s)$; it is not assumed that
it has a limit as $t\upuparrows s$. If, additionally, a finite limit
$\lim _{t\upuparrows s} Z(t)$ exists, then $Z(t)$ is called c\`{a}dl\`{a}g\index{c\`{a}dl\`{a}g}
on $[0,s]$. Functions $Z$ of finite variation\index{finite variation} on compact intervals are
understood as usually and are assumed to be c\`{a}dl\`{a}g.\index{c\`{a}dl\`{a}g} The variation at
$0$ includes $|Z(0)|$ as if $Z$ is extended by $0$ on negative axis. The
total variation of $Z$ over $[0,t]$ is denoted by $\Var (Z)_{t}$. We say
that $Z$ has a finite variation\index{finite variation} over $[0,s)$, $s\leqslant \infty $, if
$\lim _{t\upuparrows s} \Var (Z)_{t}<\infty $. We denote
$\Var (Z)_{\infty }:= \lim _{t\to \infty } \Var (Z)_{t}$.\looseness=-1

A filtration\index{filtration} on a probability space
$(\Omega ,{\mathscr{F}},\mathsf{P})$\index{probability space} is an increasing right-continuous family
$\mathbb{F}=({\mathscr{F}}_{t})_{t\in \mathbb{R}_{+}}$ of sub-$
\sigma $-fields of ${\mathscr{F}}$. No completeness assumption is made.
As usual, we define
${\mathscr{F}}_{\infty }=\sigma \bigl (\cup _{t\in \mathbb{R}_{+}}{
\mathscr{F}}_{t}\bigr )$ and, for a stopping time $\tau $\index{stopping time} the
$\sigma $-field ${\mathscr{F}}_{\tau }$ is defined by
\begin{equation*}
{\mathscr{F}}_{\tau }=\bigl \{A\in {\mathscr{F}}_{\infty }\colon A\cap
\{\tau \leqslant t\}\in {\mathscr{F}}_{t} \text{ for every }t
\geqslant 0\bigr \}.
\end{equation*}
A set $B\subset \Omega \times \mathbb{R}_{+}$ is \emph{evanescent\/} if
$B\subseteq A\times \mathbb{R}_{+}$, where $A\in {\mathscr{F}}$ and
$\mathsf{P}(A)=0$. We say that two stochastic processes $X$ and $Y$ are
\emph{indistinguishable\/} if $\{X\neq Y\}$ is an evanescent set.\index{evanescent set}

Since we do not suppose completeness of the filtration\index{filtration} $\mathbb{F}$, we
cannot expect that processes that we consider have all
\xch{paths c\`{a}dl\`{a}g}{c\`{a}dl\`{a}g paths}.\index{c\`{a}dl\`{a}g ! paths}
Instead we consider processes whose almost all paths are c\`{a}dl\`{a}g.\index{c\`{a}dl\`{a}g} Obviously,
for any c\`{a}dl\`{a}g process\index{c\`{a}dl\`{a}g ! process} $X$ adapted with respect to the completed filtration,
there is an a.s. c\`{a}dl\`{a}g $\mathbb{F}$-adapted process indistinguishable
from $X$. Furthermore, any $\mathbb{F}$-adapted process $X$ with a.s.
c\`{a}dl\`{a}g paths\index{c\`{a}dl\`{a}g ! paths} is indistinguishable from an $\mathbb{F}$-optional process
$Y$ whose paths are right-continuous everywhere and have finite left-hand limits
for $t<\rho (\omega )$ and $t>\rho (\omega )$, where $\rho $ is a
$\mathbb{F}$-stopping time with $\mathsf{P}(\rho <\infty )=0$; let us call
such $Y$ \emph{regular\/} and $\rho $ a
\emph{moment of irregularity\/} for $Y$. Dellacherie and Meyer
\cite[VI.5 (a), p.~70]{DellacherieMeyer:1982} prove that, if the filtration\index{filtration}
is not complete, every supermartingale $X$ (with right-continuous expectation)
has a modification $Y$ with the above regularity property. If we are given
just an adapted process $X$ with almost all
\xch{paths c\`{a}dl\`{a}g}{c\`{a}dl\`{a}g paths},\index{c\`{a}dl\`{a}g ! paths} we define
$\rho $ and $Y$ from values of $X$ on a countable set exactly as is done
in \cite{DellacherieMeyer:1982} in the case where $X$ is a supermartingale.\index{supermartingale}
Using \cite[Theorem IV.22, p. 94]{DellacherieMeyer:1978}, we obtain that
$\rho (\omega )=\infty $ and paths $X_{\cdot }(\omega )$ and
$Y_{\cdot }(\omega )$ coincide for those $\omega $ for which
$X_{\cdot }(\omega )$ is c\`{a}dl\`{a}g\index{c\`{a}dl\`{a}g} everywhere. Moreover, if
$\rho (\omega )<\infty $, then $Y_{t}(\omega )$ is c\`{a}dl\`{a}g\index{c\`{a}dl\`{a}g} for
$t<\rho (\omega )$ and one may put $Y_{t}(\omega )=0$ for
$t\geqslant \rho (\omega )$.

Processes with finite variation\index{finite variation} are adapted and not assumed to start from
$0$. A~moment of irregularity for them has additionally the property that
their paths have finite variation\index{finite variation} over $[0,t]$ for all
$t<\rho (\omega )$.

It is instructive to mention that, in our model, there is no need to use
general results on the existence of (a.s.) c\`{a}dl\`{a}g modifications for
martingales\index{c\`{a}dl\`{a}g ! modifications for martingales} since they can be proved directly. For example, if $L$ is an
integrable random variable with $\mathsf{E}L=0$, then the process $M$ given
by \eqref{repr2} with
$F(t)= \mathsf{E}[L|\gamma >t]{\Eins }_{\{t<t_{G}\}}$ satisfies
$M_{t}=\mathsf{E}[L|{\mathscr{F}}_{t}]$ a.s. for an arbitrary $t$. It is
trivial to check that this function $F$ has finite variation\index{finite variation} over any
$[0,t]$ with $\mathsf{P}(\gamma >t)>0$ (and over $[0,t_{G})$ if
$\mathsf{P}(\gamma =t_{G}<\infty )>0$). Thus $M$ is regular. It may be that,
if $t_{G}<\infty $ and $\mathsf{P}(\gamma =t_{G})=0$, the function $F$ has
not a finite limit as $t\upuparrows t_{G}$, or, more generally, has unbounded
variation over $[0,t_{G})$. Then a moment of irregularity is given by
\begin{equation*}
\label{eq:irregular}
\rho (\omega )=\left \{
\begin{array}{ll}
t_{G}, & \text{if $\gamma \geqslant t_{G}$;}
\\
+\infty , & \text{otherwise.}
\end{array}
\right .
\end{equation*}
It takes a finite value only on the set $\{\gamma \geqslant t_{G}\}$ of
zero measure. In all other cases we may put $\rho \equiv +\infty $. See
Remark~\ref{re:Fout} in Section~\ref{sec:2} for more details.


\section{Main results}
\label{sec:2}

Let $\gamma $ be a random variable with values in
$\overline{\mathbb{R}}_{+}$ on a probability space
$(\Omega ,{\mathscr{F}},\mathsf{P})$.\index{probability space} We tacitly assume that
$\mathsf{P}(\gamma >0)>0$. $G(t)=\mathsf{P}(\gamma \leqslant t)$,
$t\in \mathbb{R}_{+}$, stands for the distribution function of
$\gamma $ and ${\overline{G}}(t)=1-G(t)$. Put also
$t_{G}=\sup \,\{t\in \mathbb{R}_{+}\colon G(t)<1\}$ and
${\mathcal{T}}=\{t\in \mathbb{R}_{+}\colon {\mathsf{P}}(\gamma
\geqslant t)>0\}$. Note that
$\mathsf{P}(\gamma \notin {\mathcal{T}})=0$. We will often distinguish between
the following two cases:
\begin{defenum}
\item[Case~A] $\mathsf{P}(\gamma =t_{G}<\infty )=0$.
\item[Case~B] $\mathsf{P}(\gamma =t_{G}<\infty )>0$.
\end{defenum}
It is clear that ${\mathcal{T}}=[0,t_{G})$ in Case A and
${\mathcal{T}}=[0,t_{G}]$ in Case B.

We define ${\mathscr{F}}_{t}$, $t\in \mathbb{R}_{+}$, as the collection
of subsets $A$ of $\Omega $ such that $A\in {\mathscr{F}}$ and
$A\cap \{t<\gamma \}$ is either $\varnothing $ or coincides with
$\{t<\gamma \}$.

It is shown in Proposition \ref{prop:1} that ${\mathscr{F}}_{t}$ is a
$\sigma $-field for every $t\in \mathbb{R}_{+}$ and the family
$\mathbb{F}=({\mathscr{F}}_{t})_{t\in \mathbb{R}_{+}}$ is a filtration.\index{filtration}
We call this filtration\index{filtration} a \emph{single jump filtration}.\index{single jump ! filtration} It is determined
by generating elements $\gamma $ and ${\mathscr{F}}$. In this paper we
consider only single jump filtrations\index{single jump ! filtration} and, if necessary to indicate generating
elements, we use the notation $\mathbb{F}(\gamma ,{\mathscr{F}})$ for
the single jump filtration\index{single jump ! filtration} generated by $\gamma $ and
${\mathscr{F}}$.

In this section a single jump filtration
$\mathbb{F}=\mathbb{F}(\gamma ,{\mathscr{F}})$\index{single jump ! filtration} is fixed. All notions
depending on filtration\index{filtration} (stopping times,\index{stopping time} martingales,\index{martingale} local martingales,\index{local martingale}
etc.) refer to 
this filtration $\mathbb{F}$,\index{filtration} unless
otherwise specified.

\begin{prop}%
\label{prop:1}
\textup{(i)} ${\mathscr{F}}_{t}$ is a $\sigma $-field and a random variable
$\xi $ is ${\mathscr{F}}_{t}$-measurable,
$t\in \mathbb{R}_{+}$, if and only if $\xi $ is constant on
$\{t<\gamma \}$. $\xi $ is ${\mathscr{F}}_{\infty }$-measurable if and only
if $\xi $ is constant on $\{\gamma =\infty \}$.

\textup{(ii)} The family $({\mathscr{F}}_{t})_{t\in \mathbb{R}_{+}}$ is
increasing and right-continuous, i.e.\ $\mathbb{F}=({
\mathscr{F}}_{t})_{t\in \mathbb{R}_{+}}$ is a filtration.\index{filtration}

\textup{(iii)} $\gamma $ is a stopping time\index{stopping time} and
${\mathscr{F}}_{\gamma }={\mathscr{F}}_{\infty }$.

\textup{(iv)} A random variable $T$ with values in
$\overline{\mathbb{R}}_{+}$ is a stopping time\index{stopping time} if and only if it satisfies
the following property: if the set $\{T<\gamma \}$ is not empty,
then there is a number $r$ such that
%
\begin{equation}
\label{eq:NS-ST}
\{T<\gamma \}=\{T=r<\gamma \}=\{r<\gamma \}.
\end{equation}
\end{prop}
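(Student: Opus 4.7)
The entire proposition rests on the defining dichotomy: a set $A \in \mathscr{F}$ lies in $\mathscr{F}_t$ precisely when $A \cap \{t<\gamma\}$ is one of the two extremes $\varnothing$ or $\{t<\gamma\}$. For part~(i) I would verify the $\sigma$-field axioms by case analysis on this dichotomy: complementation simply swaps the two alternatives, and countable unions reduce to whether at least one constituent contains $\{t<\gamma\}$. The $\mathscr{F}_t$-measurability characterization then follows by applying the dichotomy to each preimage $\{\xi \in B\}$: this holds for every Borel $B$ if and only if $\xi$ is constant on $\{t<\gamma\}$. For the $\mathscr{F}_\infty$ analogue I would exploit $\{\gamma=\infty\} \subseteq \{t<\gamma\}$ for every $t$, so that the class of sets with the dichotomy at $\{\gamma=\infty\}$ forms a $\sigma$-field containing each $\mathscr{F}_t$; conversely, any $A \in \mathscr{F}$ with $A \subseteq \{\gamma<\infty\}$ decomposes as $\bigcup_n (A \cap \{\gamma \leq n\})$, with each piece in $\mathscr{F}_n$ by the dichotomy, and the opposite case reduces to this by complementation.

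For~(ii), monotonicity is immediate since $\{t<\gamma\} \subseteq \{s<\gamma\}$ whenever $s \leq t$. Right-continuity is the first nontrivial point: given $A \in \bigcap_{s>t} \mathscr{F}_s$, if the dichotomy failed at $t$ one could pick $\omega_0 \in A \cap \{t<\gamma\}$ and $\omega_1 \in \{t<\gamma\} \setminus A$ and then choose $s \in (t, \gamma(\omega_0) \wedge \gamma(\omega_1))$; both $\omega_0,\omega_1$ would lie in $\{s<\gamma\}$, forcing $A \cap \{s<\gamma\}$ to be strictly between $\varnothing$ and $\{s<\gamma\}$ and contradicting $A \in \mathscr{F}_s$. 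Part~(iii) is then a direct check: $\{\gamma \leq t\} \cap \{t<\gamma\} = \varnothing$ makes $\gamma$ a stopping time, and the same disjointness gives $A \cap \{\gamma \leq t\} \in \mathscr{F}_t$ for every $A \in \mathscr{F}_\infty$, hence $A \in \mathscr{F}_\gamma$; the reverse inclusion holds in any filtration.

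Part~(iv) carries the bulk of the work. The ``if'' direction is a routine case analysis: $\{T \leq t\} \cap \{t<\gamma\}$ equals $\{t<\gamma\}$ when $t \geq r$ (using $\{t<\gamma\} \subseteq \{r<\gamma\}$ and $T = r$ there) and is empty when $t < r$ (using $T \geq \gamma$ off $\{r<\gamma\}$). For the ``only if'' direction, given a stopping time $T$ with $\{T<\gamma\} \neq \varnothing$, I would fix any $\omega_0 \in \{T<\gamma\}$ and set $r = T(\omega_0)$. The dichotomy for $\{T \leq r\} \in \mathscr{F}_r$, with $\omega_0$ as non-emptiness witness, yields $\{r<\gamma\} \subseteq \{T \leq r\}$. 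The dichotomy for $\{T \leq r - 1/n\} \in \mathscr{F}_{r-1/n}$ must take the opposite alternative, because otherwise $T(\omega_0) \leq r - 1/n$ would contradict $T(\omega_0) = r$; hence $T > r - 1/n$ on $\{r-1/n<\gamma\} \supseteq \{r<\gamma\}$, and letting $n \to \infty$ gives $T = r$ on $\{r<\gamma\}$. Finally, if some other $\omega \in \{T<\gamma\}$ had $T(\omega) = s \neq r$, the same argument applied at $\omega$ would give $T = s$ on $\{s<\gamma\}$; since the smaller of $\{r<\gamma\},\{s<\gamma\}$ is contained in the larger, this forces $r = s$, completing the identification $\{T<\gamma\} = \{T = r\} \cap \{r<\gamma\} = \{r<\gamma\}$.

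The main obstacle I anticipate is not conceptual but organisational: part~(iv) requires applying the dichotomy at three different levels --- $r$, $r-1/n$, and implicitly $\max(r,s)$ for uniqueness --- and tracking the correct witness for non-emptiness at each step. Once that bookkeeping is done carefully, the proof reduces to elementary manipulations of the defining sets.
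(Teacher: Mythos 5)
Your proposal is correct in all four parts. For (i)--(iii) the paper offers no argument at all (they are declared evident, with (ii) deduced from (i)), and the details you supply are sound --- in particular your right-continuity argument, picking $s\in\bigl(t,\gamma (\omega _{0})\wedge \gamma (\omega _{1})\bigr)$ to contradict $A\in {\mathscr{F}}_{s}$, and your treatment of ${\mathscr{F}}_{\infty }$ via the decomposition $A=\cup _{n}\bigl(A\cap \{\gamma \leqslant n\}\bigr)$. The ``if'' half of (iv) is the same case analysis as in the paper. The only place where your route genuinely differs is the ``only if'' half of (iv): the paper considers all $q$ with $\{T\leqslant q<\gamma \}\neq \varnothing $, notes each such $q$ gives $\{q<\gamma \}\subseteq \{T\leqslant q\}$, defines $r$ as the infimum of these $q$, and passes to the limit via $\{q<\gamma \}\uparrow \{r<\gamma \}$ and $\{T\leqslant q\}\downarrow \{T\leqslant r\}$ as $q\downarrow r$, concluding $\{T<\gamma \}=\{r<\gamma \}\subseteq \{T\leqslant r\}$ and finishing with $T>t$ on $\{t<\gamma \}$ for $t<r$. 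You instead anchor $r:=T(\omega _{0})$ at an arbitrary witness $\omega _{0}\in \{T<\gamma \}$, apply the dichotomy at level $r$ to get $\{r<\gamma \}\subseteq \{T\leqslant r\}$, squeeze from below at levels $r-1/n$ to get $T=r$ on $\{r<\gamma \}$, and then check the value does not depend on the witness so that all of $\{T<\gamma \}$ is captured. Both arguments rest solely on the defining dichotomy and are of comparable length; the paper's infimum construction spares the separate well-definedness step, while yours avoids the monotone set-limit identities. One point worth stating explicitly in your write-up: when $r=0$ the $r-1/n$ step is vacuous, but then $T\geqslant 0=r$ holds trivially, so no gap arises.
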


\begin{prop}%
\label{prop:fv}
\textup{(i)} If $X=(X_{t})_{t\in \mathbb{R}_{+}}$ is an adapted process,
then there is a deterministic function $F(t)$, $0\leqslant t<t_{G}$, such
that $X_{t}=F(t)$ on $\{t<\gamma \wedge t_{G}\}$. If
$Y=(Y_{t})_{t\in \mathbb{R}_{+}}$ is an adapted process and
$\mathsf{P}(X_{t}=Y_{t})=1$ for every $t\in \mathbb{R}_{+}$, then
$X_{t}=Y_{t}$ identically on $\{t<\gamma \wedge t_{G}\}$.

\textup{(ii)} If $Y=(Y_{t})_{t\in \mathbb{R}_{+}}$ is a predictable process,
then there is a measurable deterministic function $C(t)$,
$t\in {\mathcal{T}}$, such that $Y_{t}=C(t)$ on
$\{t\leqslant \gamma \}$, $t\in {\mathcal{T}}$.

\textup{(iii)} If $X=(X_{t})_{t\in \mathbb{R}_{+}}$ is a process with finite
variation,\index{finite variation} then $F(t)$ in \textup{(i)} has a finite variation\index{finite variation} over
$[0,t]$ for every $t<t_{G}$ in Case A and over $[0,t_{G})$ in Case~B.

\textup{(iv)} Every semimartingale is a process with finite variation.\index{finite variation}

\textup{(v)} If $M=(M_{t})_{t\in \mathbb{R}_{+}}$ is a $\sigma $-martingale
then there are a deterministic function $F(t)$,
$t\in \mathbb{R}_{+}$, and a finite random variable $L$ such that,
up to $\mathsf{P}$-indistinguishability,
%
\begin{equation}
\label{eq:mr}
M_{t}=F(t){\Eins }_{\{t<\gamma \}}+L{\Eins }_{\{t\geqslant
\gamma \}}.
\end{equation}
\end{prop}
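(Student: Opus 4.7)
Parts (i) and (ii) follow immediately from Proposition~\ref{prop:1}. For (i), $X_t$ is $\mathscr{F}_t$-measurable and hence constant on $\{t<\gamma\}$, which is nonempty whenever $t<t_G$, so this constant defines $F(t)$; the uniqueness assertion comes from the fact that the positive-probability set $\{t<\gamma\}$ cannot be disjoint from the full-measure set $\{X_t=Y_t\}$. For (ii), I would first verify that the single-jump dichotomy passes to $\mathscr{F}_{t-}=\sigma(\bigcup_{s<t}\mathscr{F}_s)$: for $A\in\mathscr{F}_s$ with $s<t$, the inclusion $\{t\le\gamma\}\subseteq\{s<\gamma\}$ forces $A\cap\{t\le\gamma\}\in\{\varnothing,\{t\le\gamma\}\}$, and a monotone-class argument extends this to $\mathscr{F}_{t-}$; then the $\mathscr{F}_{t-}$-measurability of a predictable $Y_t$ defines $C(t)$, with joint measurability of $Y$ providing measurability of $C$ in $t$.

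For (iii), fix $t<t_G$, pick an $\omega\in\{\gamma>t\}$ (positive probability) whose path has finite variation on $[0,t]$, and invoke (i) to identify $X_\cdot(\omega)$ with $F$ on $[0,t]$, giving $\Var(F)_t<\infty$. In Case~B, the same recipe applied to some $\omega\in\{\gamma=t_G\}$ with $X(\omega)$ of finite variation on $[0,t_G]$ produces the uniform bound $\Var(F)_t\le\Var(X(\omega))_{t_G}$ for $t<t_G$.

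The core of the proposition is (v), which I would prove in three nested steps. If $M$ is a uniformly integrable martingale, Proposition~\ref{prop:1}(iii) identifies $\mathscr{F}_\gamma$ with $\mathscr{F}_\infty$, so optional stopping gives $M_\gamma=\mathsf{E}[M_\infty\mid\mathscr{F}_\gamma]=M_\infty$; since $L\in\mathscr{F}_\gamma$ implies $L\Eins_{\{\gamma\le t\}}\in\mathscr{F}_t$, a direct conditional-expectation computation yields $M_t=M_\gamma$ on $\{\gamma\le t\}$. A localizing sequence $T_n\uparrow\infty$ applied to this UI case extends the identity to local martingales on $\{\gamma\le t\le T_n\}$, and letting $n\to\infty$ the identity holds a.s.\ on $\{\gamma\le t\}$. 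For a $\sigma$-martingale, pick a strictly positive predictable $\phi$ such that $\phi\cdot M$ is a local martingale and set $N:=M-M^\gamma$; then $\phi\cdot N=(\phi\cdot M)-(\phi\cdot M)^\gamma$ is a local martingale vanishing on $[0,\gamma]$ by construction and on $(\gamma,\infty)$ by the local-martingale case, so $\phi\cdot N\equiv 0$, and associativity of the stochastic integral together with $\phi>0$ then gives $N=\phi^{-1}\cdot(\phi\cdot N)\equiv 0$. Combining with (i) and setting $L:=M_\gamma$ on $\{\gamma<\infty\}$ yields \eqref{eq:mr}.

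For (iv), write the semimartingale as $X=X_0+M+A$ with $M$ a local martingale and $A$ of finite variation. Part (v) makes $M_t=F_M(t)\Eins_{\{t<\gamma\}}+L_M\Eins_{\{t\ge\gamma\}}$ a process which is constant after $\gamma$, so the issue reduces to finite variation of the deterministic $F_M$. Passing to the measure $\mathsf{Q}_T$ with density $\Eins_{\{\gamma>T\}}/\mathsf{P}(\gamma>T)$ (for $T<t_G$), Jacod's theorem on invariance of the semimartingale property under absolutely continuous changes of measure keeps $M$ a semimartingale which is $\mathsf{Q}_T$-a.s.\ equal to the deterministic $F_M$ on $[0,T]$, and the Bichteler--Dellacherie characterization (testing simple integrands of unit modulus against the sign of the increments of $F_M$) forces any deterministic c\`adl\`ag semimartingale to be of finite variation on compacts. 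The Case~B refinement is obtained by the same device after first subtracting the terminal jump from the path. The main obstacle is the $\sigma$-martingale step of (v): once the right characterization of $\sigma$-martingales (a strictly positive predictable reducer) is chosen, the computation is short, but associativity of the stochastic integral and the identification $N=\phi^{-1}\cdot(\phi\cdot N)$ require care.
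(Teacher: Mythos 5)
Your proposal is correct, and for (i)--(iii) and the uniformly integrable / local-martingale steps of (v) it is essentially the paper's own argument (atom property from Proposition~\ref{prop:1}, the identity $M_{t}{\Eins }_{\{\gamma \leqslant t\}}=\mathsf{E}[M_{\infty }{\Eins }_{\{\gamma \leqslant t\}}|{\mathscr{F}}_{t}]$, then localization). Where you genuinely diverge is in (iv) and in the $\sigma $-martingale step of (v). For (iv) the paper stays elementary: for a uniformly integrable martingale it writes $F(t)=\mathsf{E}\bigl (M_{\infty }{\Eins }_{\{t<\gamma \}}\bigr )/{\overline{G}}(t)$ explicitly and reads off finite variation of $F$ as a quotient of right-continuous bounded-variation functions with denominator bounded away from zero on $[0,t]$, $t<t_{G}$ (on $[0,t_{G})$ in Case B), then localizes and adds the finite-variation part of the semimartingale; you instead condition on $\{\gamma >T\}$, use invariance of the semimartingale property under an absolutely continuous change of measure, and use the good-integrator half of Bichteler--Dellacherie to force a deterministic c\`adl\`ag semimartingale to have finite variation --- heavier machinery, but sound, and it applies to the semimartingale directly without isolating the martingale part's formula. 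For the $\sigma $-martingale step, the paper first uses (iv) to turn the integrals ${\Eins }_{\Sigma _{n}}\cdot M$ into Lebesgue--Stieltjes integrals and then argues pathwise with the variation measure of $M$ on the interval after $\gamma $; you instead take a strictly positive predictable reducer $\phi $ and use associativity to get $M-M^{\gamma }=\phi ^{-1}\cdot \bigl (\phi \cdot (M-M^{\gamma })\bigr )=0$. This is valid, but the paper's definition of a $\sigma $-martingale is via the increasing predictable sets $\Sigma _{n}$, so you are implicitly invoking the (standard) equivalence with the reducer formulation; with the paper's definition you could finish more directly, since ${\Eins }_{\Sigma _{n}}\cdot (M-M^{\gamma })=0$ for all $n$ and ${\Eins }_{\Sigma _{n}}\uparrow 1$ gives $M-M^{\gamma }=0$ by dominated convergence for stochastic integrals. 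Two small points to tighten: in (ii), measurability of $C$ does not follow from joint measurability of $Y$ alone --- fix $t\in {\mathcal{T}}$, pick a single $\omega $ with $\gamma (\omega )\geqslant t$ and note that $C$ coincides with the path $Y_{\cdot }(\omega )$ on $[0,t]$, as the paper does; and in Case B of your (iv) the left limit of $F$ at $t_{G}$ that you subtract exists because almost all paths are c\`adl\`ag and $\{\gamma =t_{G}\}$ has positive probability, which deserves a word.
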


Statement (iv) is not surprising. If the $\sigma $-field
${\mathscr{F}}$ is countably generated, then our filtration\index{filtration} is a special
case of a filtration generated by a marked point process, and it is known,
see \cite{Jacod1979}, that then all martingales are of finite variation.\index{finite variation}
In general, a single jump filtration\index{single jump ! filtration} is a special case of a jumping filtration,
see \cite{JacodSkorokhod1994}, where again all martingales are of finite
variation.\index{finite variation}

\begin{remark}%
\label{re:Ffv}
If $M$ is a $\sigma $-martingale, then it is a process with finite variation\index{finite variation}
due to (iv) and, hence, the function $F(t)$ in \eqref{eq:mr} has a finite
variation\index{finite variation} over $[0,t]$ for every $t<t_{G}$ in Case A and over
$[0,t_{G})$ in Case B according to (iii).
\end{remark}

\begin{remark}%
\label{re:Fout}
According to (i), the function $F(t)$ in \eqref{eq:mr} is uniquely determined
for $t<t_{G}$. Since $\mathsf{P}(\gamma > t_{G})=0$, the stochastic interval
$\llbracket t_{G},\gamma \rlb $ is an evanescent set.\index{evanescent set} Hence, $F(t)$ can
be defined arbitrarily for $t\geqslant t_{G}$. For example, we can put
it equal to $0$ for $t\geqslant t_{G}$. Then $F(t)$ has a finite variation\index{finite variation}
on compact intervals if $t_{G}=+\infty $ or in Case B. In Case A, if
$t_{G}$ is finite, $F(t)$ may have infinite variation\index{infinite variation} over
$[0,t_{G})$ (and even not have a finite limit as
$t\upuparrows t_{G}$), see Theorem~\ref{th:solutions} and Example~\ref{ex:usualcond}
below. All other points are regular for $F(t)$. Now put
$\rho (\omega )=t_{G}<+\infty $ if we are in Case A,
$t_{G}<+\infty $, $\lim _{t\upuparrows t_{G}}\Var (F)_{t}=\infty $, and
$\gamma (\omega )\geqslant t_{G}$, and let $\rho (\omega )=+\infty $ in
all other cases. It follows that $\rho $ is a moment of irregularity for
the process in the right-hand side of \eqref{eq:mr}.
\end{remark}

In what follows, when we write that the process $M$ has the representation \eqref{eq:mr}, this means that $M$ and the right-hand side of \eqref{eq:mr} are indistinguishable. Moreover, we tacitly assume that
$F(t)$ is right-continuous for $t\geqslant t_{G}$ to ensure that the right-hand
side of \eqref{eq:mr} is right-continuous.

Propositions \ref{prop:1} and \ref{prop:fv} explain why we call
$\mathbb{F}$ a single jump filtration:\index{single jump ! filtration} all randomness appears at time
$\gamma $. It is not so natural to describe local martingales\index{local martingale} with respect
to $\mathbb{F}$ as single jump processes. As we will see, the function
$F$ in \eqref{eq:mr} need not be continuous, so local martingales\index{local martingale} may have
several jumps.

Our main goal is to provide a complete description of all local martingales.\index{local martingale}
According to Proposition \ref{prop:fv} (v), a necessary condition is that
it is represented in form \eqref{eq:mr}. Thus, it is enough to study only
processes of this form.

\begin{thm}%
\label{th:2}
Let $F(t)$, $0\leqslant t<t_{G}$, be a deterministic c\`{a}dl\`{a}g function,\index{c\`{a}dl\`{a}g ! function}
$L$ be a random variable, and a process
$M=(M_{t})_{t\in \mathbb{R}_{+}}$ be given by
%
\begin{equation}
\label{eq:mr2}
M_{t}=F(t){\Eins }_{\{t<\gamma \}}+L{\Eins }_{\{t\geqslant
\gamma \}}.
\end{equation}
The following statements are equivalent:
\begin{enumerate}
\item[\textup{(i)}] $M=(M_{t})_{t\in \mathbb{R}_{+}}$ is a local martingale.\index{local martingale}
\item[\textup{(ii)}] $(M_{t})_{t\in {\mathcal{T}}}$ is a martingale.\index{martingale}
\item[\textup{(iii)}]
%
\begin{equation}
\label{eq:integrab}
{\mathsf{E}}\bigl (|M_{t}| \bigr )<\infty , \quad t\in {\mathcal{T}},
\end{equation}
and
%
\begin{equation}
\label{eq:main+}
{\mathsf{E}}(M_{t}) = \mathsf{E}(M_{0}), \quad t\in {\mathcal{T}}.
\end{equation}
\end{enumerate}
\end{thm}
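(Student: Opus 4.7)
My plan is to establish the three non-trivial implications
(iii)$\Rightarrow$(ii), (ii)$\Rightarrow$(i), and (i)$\Rightarrow$(ii);
the remaining (ii)$\Rightarrow$(iii) follows at once by taking
expectations. The engine of the whole argument is the interplay between
the ``constant on $\{s<\gamma\}$'' characterisation of
${\mathscr{F}}_s$-measurability from Proposition~\ref{prop:1}(i) and the
rigid form of stopping times from Proposition~\ref{prop:1}(iv).

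For (iii)$\Rightarrow$(ii), I would fix $s\leqslant t$ in $\mathcal{T}$.
Since $M_t=L=M_s$ on $\{s\geqslant \gamma\}$, Proposition~\ref{prop:1}(i)
forces $\mathsf{E}[M_t\mid{\mathscr{F}}_s]$ to have the form
$c\,\Eins_{\{s<\gamma\}}+L\,\Eins_{\{s\geqslant \gamma\}}$ for a single
deterministic constant~$c$. Integrating over $\{s<\gamma\}$ and applying
\eqref{eq:main+} at both times $s$ and $t$, together with
$\mathsf{E}[M_t\Eins_{\{s\geqslant \gamma\}}]=\mathsf{E}[M_s\Eins_{\{s\geqslant \gamma\}}]$,
yields $c\,{\overline{G}}(s)=F(s){\overline{G}}(s)$; hence $c=F(s)=M_s$
on $\{s<\gamma\}$ because ${\overline{G}}(s)>0$ for $s\in\mathcal{T}$.

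For (ii)$\Rightarrow$(i), I would split by case. In Case~B one has
$\gamma\leqslant t_G$ a.s., so $M_t=L=M_{t_G}$ for $t>t_G$ and the
martingale on $[0,t_G]$ extends trivially to $\mathbb{R}_+$; Case~A with
$t_G=\infty$ is immediate. For Case~A with $t_G<\infty$, I would pick
$r_n\uparrow t_G$ and set $T_n=r_n$ on $\{r_n<\gamma\}$, $T_n=+\infty$
on $\{\gamma\leqslant r_n\}$. Proposition~\ref{prop:1}(iv) certifies
$T_n$ as a stopping time, the sequence is increasing in $n$, and
$T_n\uparrow\infty$ a.s.\ because $\mathsf{P}(\gamma\geqslant t_G)=0$.
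A direct evaluation from \eqref{eq:mr2} then yields
$M^{T_n}_t=M_{r_n\wedge t}$, which is a uniformly integrable martingale
(it coincides with $M$ on $[0,r_n]\subset\mathcal{T}$, is constant
afterwards, and has integrable terminal value by \eqref{eq:integrab}
applied at $r_n$).

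For (i)$\Rightarrow$(ii), given localizing $T_n\uparrow\infty$ with each
$M^{T_n}$ a UI martingale, I would attach to $T_n$ the constant
$r_n\in[0,+\infty]$ furnished by Proposition~\ref{prop:1}(iv), with the
convention $r_n=+\infty$ when $\{T_n<\gamma\}=\varnothing$. The decisive
observation is a contradiction argument: if only finitely many $r_n$
exceeded some $t$ with $\mathsf{P}(t<\gamma)>0$, then
$T_n\leqslant r_n<t$ would hold on $\{t<\gamma\}$ for all the remaining
$n$, violating $T_n\uparrow\infty$ a.s. Thus for every $t<t_G$ some
$r_n$ satisfies $r_n\geqslant t$, and a short case check in
\eqref{eq:mr2} shows $M^{T_n}_s=M_s$ identically for $s\leqslant t$, so
the martingale property of $M^{T_n}$ transfers to $M$ on $[0,t_G)$.
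The residual point $t=t_G$ in Case~B is the main technical obstacle:
the positivity of $\mathsf{P}(\gamma=t_G)$ forces $r_n\geqslant t_G$,
i.e.\ $\{T_n<\gamma\}=\varnothing$, for all sufficiently large~$n$,
which means $T_n\geqslant \gamma$ a.s.\ and hence $M^{T_n}=M$; so $M$
is in fact itself a uniformly integrable martingale in Case~B---a
substantial strengthening peculiar to that case and the most delicate
part of the argument.
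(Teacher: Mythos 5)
Your proof is correct and follows essentially the same route as the paper: the ``constant on $\{s<\gamma\}$'' argument for (iii)$\Rightarrow$(ii), the stopping times $T_n=t_n\Eins_{\{t_n<\gamma\}}+\infty\Eins_{\{\gamma\leqslant t_n\}}$ for (ii)$\Rightarrow$(i), and the extraction of the constants $r_n$ from a localizing sequence via Proposition~\ref{prop:1}(iv) for (i)$\Rightarrow$(ii), including the observation that in Case~B the local martingale is automatically a uniformly integrable martingale. The only nitpick is that ``$r_n\geqslant t_G$, i.e.\ $\{T_n<\gamma\}=\varnothing$'' overstates a set-level fact—Proposition~\ref{prop:1}(iv) only yields $\mathsf{P}(T_n<\gamma)=0$ when $r_n\geqslant t_G$—but your subsequent conclusion $T_n\geqslant\gamma$ a.s.\ is exactly what is needed, so the argument stands.
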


In the case where ${\mathscr{F}}=\sigma \{\gamma \}$, equivalence (i)
and (ii) is proved in~\cite{ChouMeyer:1975}.

Concerning the last statement of the proposition, let us emphasize that
if $t_{G}<\infty $ and $\mathsf{P}(\gamma =t_{G})=0$, a local martingale
$M=(M_{t})_{t\in \mathbb{R}_{+}}$\index{local martingale} may not be a martingale\index{martingale} on
$[0,t_{G}]$; obviously, if it is a martingale,\index{martingale} then it is uniformly integrable,
and necessary and sufficient conditions for this are given in Theorem~\ref{th:class}.

If \eqref{eq:mr2} and \eqref{eq:integrab} hold, then
%
\begin{equation}
\label{eq:integrabL}
{\mathsf{E}}\bigl (|L| {\Eins }_{\{\gamma \leqslant t\}}\bigr )<\infty ,
\quad t\in {\mathcal{T}},
\end{equation}
and one can define the conditional expectation $H(t)$ of $L$ given that
$\gamma =t$ for $t\in {\mathcal{T}}$:
%
\begin{equation}
\label{eq:Hdef1}
H(t)=\mathsf{E}[L|\gamma =t].
\end{equation}
More precisely, $H(t)$ is a Borel function on ${\mathcal{T}}$ with finite
values such that for any $t\in {\mathcal{T}}$
\begin{equation*}
\label{eq:Hdef2}
{\mathsf{E}}\bigl (L{\Eins }_{\{\gamma \leqslant t\}}\bigr ) = \int _{[0,t]}
H(s)\,dG(s).
\end{equation*}
Note that the function $H$ is $dG$-a.s.\ unique and is $dG$-integrable
over any closed interval in ${\mathcal{T}}$. It is convenient to introduce
a notation for such functions.

Let $L^{1}_{\mathrm{loc}}(dG)$ be the set of all Borel functions $z$ on
${\mathcal{T}}$ such that
\begin{equation*}
\int _{[0,t]} |z(s)|\,dG(s)<\infty \quad
\text{for all $t\in {\mathcal{T}}$}.
\end{equation*}
Given a function $Z\colon [0,t_{G})\to \mathbb{R}$, let us write
$Z\overset{\mathrm{loc}}{\ll }G$ if there is $z\in L^{1}_{\mathrm{loc}}(dG)$ such
that $Z(t)=Z(0)+\int _{(0,t]} z(s)\,dG(s)$ for all $t<t_{G}$; in this case
we put $\frac{dZ}{dG}(t):=z(t)$ for $0<t<t_{G}$. Let us emphasize that
in Case B this definition implies that $z$ is $dG$-integrable over
$[0,t_{G}]$ and, hence, the function $Z$ has a finite variation\index{finite variation} over
$[0,t_{G})$ and there is a finite limit
$\lim _{t\upuparrows t_{G}} Z(t) = Z(0) + \int _{(0,t_{G})}z(s)\,dG(s)$.
Note also that in this definition the value $z(0)$ can be chosen arbitrarily
even if $G(0)>0$; the same refers to the value $z(t_{G})$ in Case B. Correspondingly,
$dZ/dG$ is defined only for $0<t<t_{G}$.

Let $G$ be a distribution function of a law on $[0,+\infty ]$. We will
say that a pair $(F,H)$ satisfies Condition M if
%
\begin{gather}
\label{eq:MF}
F\colon [0,t_{G})\to \mathbb{R}, \quad F\overset{\mathrm{loc}}{\ll }G,
\\
\label{eq:MH}
H\colon {\mathcal{T}}\to \mathbb{R}, \quad H\in L^{1}_{\mathrm{loc}}(dG),
\\
\label{eq:main}
F(t){\overline{G}}(t) + \int \limits _{(0,t]}H(s)\,dG(s) = F(0){
\overline{G}}(0), \quad t<t_{G},
\end{gather}
and, additionally in Case B,
%
\begin{equation}
\label{eq:mainB}
\lim _{t\upuparrows t_{G}} F(t) = H(t_{G}).
\end{equation}

\begin{prop}%
\label{prop:solutions}
\textup{(a)} Let $H$ be any function satisfying \eqref{eq:MH}. Define
%
\begin{equation}
\label{eq:F}
F(t)= {\overline{G}}(t)^{-1} \Bigl [F(0){\overline{G}}(0) - \int
\limits _{(0,t]} H(s)\,dG(s)\Bigr ],\quad 0<t<t_{G},
\end{equation}
where $F(0)$ is an arbitrary real number in Case A and
%
\begin{equation}
\label{eq:F0}
F(0) = {\overline{G}}(0)^{-1}\int \limits _{(0,t_{G}]}H(s)\,dG(s)
\end{equation}
in Case B. Then the pair $(F,H)$ satisfies Condition M. Conversely, if
$F$ is such that the pair $(F,H)$ satisfies Condition M, then $F$ satisfies \eqref{eq:F} and, in Case B, \eqref{eq:F0} holds.

\textup{(b)} Let $F$ be any function satisfying \eqref{eq:MF}. Define
$H(0)$ arbitrarily,
%
\begin{equation}
\label{eq:H}
H(t)=F(t)-{\overline{G}}(t-)\frac{dF}{dG}(t),\quad 0< t<t_{G},
\end{equation}
$H(t_{G})$ arbitrarily in Case A and
%
\begin{equation}
\label{eq:Ht}
H(t_{G})=\lim _{t\upuparrows t_{G}} F(t)
\end{equation}
in Case B. Then the pair $(F,H)$ satisfies Condition M. Conversely, if
$H$ is such that the pair $(F,H)$ satisfies Condition M, then $H$ satisfies \eqref{eq:H} and, in Case B, \eqref{eq:Ht} holds.
\end{prop}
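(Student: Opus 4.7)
The plan is to reduce both halves of the proposition to the single identity
$$F(t)\overline{G}(t) = F(0)\overline{G}(0) - \int_{(0,t]}H(s)\,dG(s), \quad t<t_G, \qquad (\ast)$$
which is just a rearrangement of \eqref{eq:main}. Since $\overline{G}(t)>0$ for $t<t_G$, dividing $(\ast)$ by $\overline{G}(t)$ yields exactly \eqref{eq:F}, which gives the converse statement in (a) at once. Taking $t\upuparrows t_G$ in $(\ast)$ and using \eqref{eq:mainB} together with $\overline{G}(t_G-)=\mathsf{P}(\gamma=t_G)$ in Case~B (since $\gamma \leqslant t_G$ a.s.\ there), one then recovers \eqref{eq:F0}.

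For the forward direction of (a), I would define $F$ by \eqref{eq:F} so that $(\ast)$, hence \eqref{eq:main}, holds by construction, and then establish $F\overset{\mathrm{loc}}{\ll}G$ by writing $F=A/\overline{G}$ with $A:=F\overline{G}$ and applying integration by parts. Using $dA=-H\,dG$ and the jump identity $d(1/\overline{G})(s)=dG(s)/[\overline{G}(s)\overline{G}(s-)]$ (checked at jumps and on the continuous part) one obtains the explicit density
$$\frac{dF}{dG}(s)=\frac{F(s-)-H(s)}{\overline{G}(s)},\quad 0<s<t_G,$$
which lies in $L^1_{\mathrm{loc}}(dG)$ because $H$ does and $F$ is bounded on compact subintervals of $\mathcal{T}$ (in Case~B on all of $[0,t_G)$, using $\overline{G}\geqslant \overline{G}(t_G-)>0$). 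At $s=t_G$ in Case~B the density value is irrelevant and may be set to $0$. The Case-B limit \eqref{eq:mainB} follows by passing to the limit in $(\ast)$ with $F(0)$ prescribed by \eqref{eq:F0}.

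For (b), I would start from $F$ satisfying \eqref{eq:MF}, set $f:=dF/dG$, and apply the product-rule form
$$F(t)\overline{G}(t)-F(0)\overline{G}(0)=\int_{(0,t]}F(s)\,d\overline{G}(s)+\int_{(0,t]}\overline{G}(s-)\,dF(s),$$
so that after substituting $d\overline{G}=-dG$ and $dF=f\,dG$ the left-hand side equals $-\int_{(0,t]}[F(s)-\overline{G}(s-)f(s)]\,dG(s)$. Matching with $(\ast)$ shows that \eqref{eq:main} is equivalent to $H(s)=F(s)-\overline{G}(s-)f(s)$ for $dG$-a.e.\ $s\in(0,t_G)$, which is \eqref{eq:H}; local $dG$-integrability is immediate from $|\overline{G}(s-)|\leqslant 1$, $f\in L^1_{\mathrm{loc}}(dG)$, and boundedness of $F$ on compacts. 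In Case~B the limit $\lim_{t\upuparrows t_G}F(t)$ exists and is finite by the remark following the definition of $\overset{\mathrm{loc}}{\ll}G$, so \eqref{eq:Ht} and \eqref{eq:mainB} coincide. The converse in (b) is then automatic: uniqueness of $f$ up to $dG$-null sets forces any $H$ compatible with $(\ast)$ to agree with \eqref{eq:H} $dG$-almost everywhere.

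The step I expect to require the most care is the integration-by-parts computation, specifically selecting the version of the product rule that pairs $F(s)$ with $\overline{G}(s-)$ so that the asymmetry built into \eqref{eq:H} emerges correctly, together with the verification of the jump identity $d(1/\overline{G})(s)=dG(s)/[\overline{G}(s)\overline{G}(s-)]$ that feeds into (a). Once that bookkeeping is in place, both directions reduce to algebraic rearrangements of the identity $(\ast)$.
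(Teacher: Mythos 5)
Your proposal is correct and follows essentially the same route as the paper: everything is reduced to the identity \eqref{eq:main}, the property $F\overset{\mathrm{loc}}{\ll }G$ in (a) is obtained from the expansion of $1/{\overline{G}}$ together with integration by parts, part (b) comes from the product rule applied to $F{\overline{G}}$, and the Case B integrability is handled by the bound ${\overline{G}}(s)\geqslant \mathsf{P}(\gamma =t_{G})>0$. The only cosmetic difference is that your integration by parts in (a) pairs $F(s-)$ with ${\overline{G}}(s)$ and yields the density $\bigl (F(s-)-H(s)\bigr )/{\overline{G}}(s)$, i.e.\ the form \eqref{eq:H++} rather than \eqref{eq:H}, which the paper itself notes are equivalent.
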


\begin{thm}%
\label{th:solutions}
In order that a right-continuous process
$M=(M_{t})_{t\in \mathbb{R}_{+}}$ be a local martingale\index{local martingale} it is necessary
and sufficient that there be a pair $(F,H)$ satisfying Condition M and
a random variable $L'$ satisfying
%
\begin{equation}
\label{eq:integrab2}
{\mathsf{E}}\bigl (|L'| {\Eins }_{\{\gamma \leqslant t\}}\bigr )<\infty ,
\quad t\in {\mathcal{T}}, \qquad \text{and}\qquad \mathsf{E}[L'|\gamma ]=\xch{0,}{0.}
\end{equation}
such that, up to $\mathsf{P}$-indistinguishability,
%
\begin{equation}
\label{eq:mr3}
M_{t}=F(t){\Eins }_{\{t<\gamma \}}+\bigl (H(\gamma )+L'\bigr ){
\Eins }_{\{t\geqslant \gamma \}},
\end{equation}
\end{thm}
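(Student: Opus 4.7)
My plan is to combine Theorem~\ref{th:2}---which characterizes local martingales of the form \eqref{eq:mr2} via integrability and the constant-expectation identity on $\mathcal{T}$---with Proposition~\ref{prop:solutions}, which identifies Condition~M as the algebraic content of that identity. For \emph{sufficiency}, given $(F,H)$ satisfying Condition~M and $L'$ as in \eqref{eq:integrab2}, I set $L := H(\gamma)+L'$ so that the process in \eqref{eq:mr3} is of the form \eqref{eq:mr2}. Integrability \eqref{eq:integrab} follows from $|L|\Eins_{\{\gamma\le t\}}\le |H(\gamma)|\Eins_{\{\gamma\le t\}}+|L'|\Eins_{\{\gamma\le t\}}$ combined with \eqref{eq:MH} and \eqref{eq:integrab2}. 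Since $\{\gamma\le t\}\in\sigma(\gamma)$ and $\mathsf{E}[L'|\gamma]=0$, one computes $\mathsf{E}(M_t)=F(t)\overline{G}(t)+\int_{[0,t]}H(s)\,dG(s)$, which by \eqref{eq:main} (for $t<t_G$) and \eqref{eq:F0} (for $t=t_G$ in Case~B, itself a consequence of Condition~M by Proposition~\ref{prop:solutions}(a)) equals $\mathsf{E}(M_0)$. Theorem~\ref{th:2} then gives that $M$ is a local martingale.

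For \emph{necessity}, I would first apply Proposition~\ref{prop:fv}(v) to represent $M$ as in \eqref{eq:mr2} with some deterministic $F$ on $[0,t_G)$ and some random variable $L$, and Theorem~\ref{th:2} to obtain \eqref{eq:integrab} (whence \eqref{eq:integrabL}) and \eqref{eq:main+}. I then define $H(t):=\mathsf{E}[L|\gamma=t]$ on $\mathcal{T}$, so that $H\in L^{1}_{\mathrm{loc}}(dG)$ by \eqref{eq:integrabL}, and set $L':=L-H(\gamma)$, so that \eqref{eq:integrab2} holds by construction. Expanding \eqref{eq:main+} with the representation reproduces exactly the functional equation \eqref{eq:main} for every $t<t_G$, and applying it at $t=t_G$ in Case~B delivers \eqref{eq:F0}. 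The forward direction of Proposition~\ref{prop:solutions}(a), applied to this $H$ together with the given $F(0)$ (Case~A) or the value forced by \eqref{eq:F0} (Case~B), produces via \eqref{eq:F} a function coinciding with $F$ on $(0,t_G)$ and asserts that the resulting pair $(F,H)$ satisfies Condition~M in full.

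The main obstacle is matching the a priori $F$ supplied by Proposition~\ref{prop:fv}(v) with the one delivered by \eqref{eq:F}, and thereby inheriting both \eqref{eq:MF} and, in Case~B, the boundary condition \eqref{eq:mainB}. In Case~A this is immediate, since $\overline{G}(t)>0$ for $t<t_G$ makes \eqref{eq:main} uniquely solvable for $F(t)$. In Case~B one must additionally align $F(0)$ with \eqref{eq:F0} by invoking \eqref{eq:main+} at $t=t_G$, and then extract \eqref{eq:mainB} by letting $t\upuparrows t_G$ in \eqref{eq:main}, exploiting $\overline{G}(t_G-)=\Delta G(t_G)>0$. This limiting step is the only genuine analytic point of the proof; everything else is a clean substitution-and-Fubini computation together with bookkeeping around a possible atom of $G$ at $0$ (where the $H(0)G(0)$ contributions to $\mathsf{E}(M_t)$ and $\mathsf{E}(M_0)$ cancel).
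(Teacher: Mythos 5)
Your proposal is correct and takes essentially the same route as the paper: both directions reduce to Theorem~\ref{th:2} via Proposition~\ref{prop:fv}~(v), with $H(t)=\mathsf{E}[L|\gamma =t]$, $L'=L-H(\gamma )$, and Condition~M obtained from (resp.\ fed into) Proposition~\ref{prop:solutions}~(a), including the Case~B boundary relation between \eqref{eq:F0} and \eqref{eq:mainB}. The only cosmetic difference is that the paper defines $L'=\bigl (L-H(\gamma )\bigr ){\Eins }_{\{\gamma <\infty \}}$, so that $L'$ is well defined and $\mathsf{E}[L'|\gamma ]=0$ also on $\{\gamma =\infty \}$ (where $H(\gamma )$ is undefined); you should add this indicator, but nothing else changes.
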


The statement that the process $M$ given by \eqref{eq:mr3} with
$L'=0$ is a local martingale\index{local martingale} if $F\overset{\mathrm{loc}}{\ll }G$ and $H$ is constructed
as in part (b) of Proposition~\ref{prop:solutions}, is essentially due
to Herdegen and Herrmann \cite{HerdegenHerrmann:16}, though they formulate \eqref{eq:H} in an equivalent form:
%
\begin{equation}
\label{eq:H++}
H(t)=F(t-)-{\overline{G}}(t)\frac{dF}{dG}(t),\quad 0< t<t_{G}.
\end{equation}
They also prove that, in Case B, if $F$ has infinite variation on
$[0,t_{G})$ (and hence does not satisfy $F\overset{\mathrm{loc}}{\ll }G$), then
$M$ given by \eqref{eq:mr2} is not a semimartingale,\index{semimartingale} see
\cite[Lemma B.6]{HerdegenHerrmann:16}. (Note that this follows also from
our Proposition~\ref{prop:fv} (iv).) We add that, also in Case B, if
$H$ is $dG$-integrable over $(0,t_{G})$, $F$ satisfies \eqref{eq:F}, but
$F(0)$ is greater or less than the right-hand side of \eqref{eq:F0}, then
$M$ given by \eqref{eq:mr3} with $L'$ satisfying \eqref{eq:integrab2},
is a supermartingale\index{supermartingale} or a submartingale,\index{submartingale} respectively, cf.\ Theorem \ref{th:class}.

The fact that $H(0)$ can be chosen arbitrarily in Proposition~\ref{prop:solutions}
(b) says only that $L$ can be an arbitrary integrable random variable on
the set $\{\gamma =0\}$, which is evident ab initio. On the contrary, the
fact that $F(0)$ can be chosen arbitrarily in (a) in Case~A is an interesting
feature of this model. It says that, given the terminal value
$M_{\infty }$\index{terminal value} of $M$ (on $\{\gamma <\infty \}$), one can freely choose the
initial value $M_{0}$ of $M$ (on $\{\gamma >0\}$) to keep the property
of being a local martingale\index{local martingale} for $M$.

\begin{cor}%
\label{co:decomp}
Every local martingale\index{local martingale} $M=(M_{t})_{t\in \mathbb{R}_{+}}$ has a unique
decomposition into the sum $M=M'+M''$ of two local martingales\index{local martingale} $M'$ and
$M''$, where $M'$ is adapted with respect to the smallest filtration\index{filtration} making
$\gamma $ a stopping time,\index{stopping time} and $M''$ which vanishes on
$\{t<\gamma \}$ and satisfies $\mathsf{E}M''_{0} = 0$.
\end{cor}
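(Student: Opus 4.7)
\emph{Proof plan.} The idea is to read off the decomposition directly from the canonical representation provided by Theorem~\ref{th:solutions}, and to use that same representation together with Proposition~\ref{prop:solutions} for uniqueness.

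\textbf{Existence.} Applying Theorem~\ref{th:solutions}, write
\[
M_t = F(t)\Eins_{\{t<\gamma\}} + \bigl(H(\gamma)+L'\bigr)\Eins_{\{t\geqslant\gamma\}}
\]
with $(F,H)$ satisfying Condition~M and $L'$ satisfying \eqref{eq:integrab2}. Set
\[
M'_t := F(t)\Eins_{\{t<\gamma\}} + H(\gamma)\Eins_{\{t\geqslant\gamma\}}, \qquad M''_t := L'\Eins_{\{t\geqslant\gamma\}}.
\]
Then $M'$ is a Borel function of $(t,\gamma)$, so it is adapted to the smallest filtration making $\gamma$ a stopping time, and a second application of Theorem~\ref{th:solutions} (with the same $(F,H)$ and vanishing remainder) shows that it is a local martingale. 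The process $M''$ vanishes on $\{t<\gamma\}$ by construction; moreover $\mathsf{E}M''_0 = \mathsf{E}[L'\Eins_{\{\gamma=0\}}]=0$ by the tower property. A third application of Theorem~\ref{th:solutions}, with the trivial pair $(F,H)\equiv(0,0)$ (which manifestly satisfies Condition~M), shows that $M''$ is a local martingale as well.

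\textbf{Uniqueness.} Suppose $M = M'_i + M''_i$, $i = 1,2$, are two admissible decompositions. Applying Theorem~\ref{th:solutions} to each $M'_i$ and each $M''_i$ inside the ambient filtration $\mathbb{F}(\gamma,\mathscr{F})$ yields canonical parameters $(F_i,H_i,L'_i)$ and $(F''_i,H''_i,L''_i)$. The $\mathbb{F}(\gamma,\sigma(\gamma))$-adaptedness of $M'_i$ together with the fact that its $F$-part is already deterministic forces $L'_i\Eins_{\{\gamma<\infty\}}$ to be $\sigma(\gamma)$-measurable; coupled with $\mathsf{E}[L'_i\mid\gamma]=0$ this gives $L'_i\Eins_{\{\gamma<\infty\}}=0$ a.s., so $M'_i$ effectively has the form $F_i(t)\Eins_{\{t<\gamma\}}+H_i(\gamma)\Eins_{\{t\geqslant\gamma\}}$. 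Similarly, the vanishing of $M''_i$ on $\{t<\gamma\}$ forces $F''_i\equiv 0$; Proposition~\ref{prop:solutions}(b) then gives $H''_i\equiv 0$ on $(0,t_G]$, and $\mathsf{E}M''_{i,0}=0$ eliminates the remaining freedom at the atom~$\{\gamma=0\}$. Equating $M'_1+M''_1=M'_2+M''_2$ and comparing on $\{t<\gamma\}$ gives $F_1=F_2$ via Proposition~\ref{prop:fv}(i), whence $H_1=H_2$ by Proposition~\ref{prop:solutions}(b); comparing on $\{t\geqslant\gamma\}$ then gives $L''_1=L''_2$.

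\textbf{Main obstacle.} The only subtle step will be the extraction of $L'_i=0$ on $\{\gamma<\infty\}$ from the $\mathbb{F}(\gamma,\sigma(\gamma))$-adaptedness of $M'_i$: one must observe that $\sigma(\gamma)\cap\mathscr{F}_t$-measurability of $M'_{i,t}$ on $\{t\geqslant\gamma\}$ forces $L'_i$ to agree there with a Borel function of $\gamma$, and then combine this with the mean-zero conditional property coming from Theorem~\ref{th:solutions}. Once this identification is done, the rest is mechanical bookkeeping with the representation theorem.
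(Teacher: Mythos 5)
Your proposal is correct and follows essentially the same route as the paper: existence via the explicit split of the representation \eqref{eq:mr3} into $M'_t=F(t)\Eins_{\{t<\gamma\}}+H(\gamma)\Eins_{\{t\geqslant\gamma\}}$ and $M''_t=L'\Eins_{\{t\geqslant\gamma\}}$, and uniqueness from Theorem~\ref{th:solutions} by showing that vanishing on $\{t<\gamma\}$ plus $\mathsf{E}M''_0=0$ forces $F\equiv 0$ and $H=0$ $dG$-a.s., while $\sigma\{\gamma\}$-adaptedness plus $\mathsf{E}[L'\mid\gamma]=0$ forces $L'\Eins_{\{\gamma<\infty\}}=0$. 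The only (immaterial) difference is that you normalize each summand of the two decompositions separately and then match parameters, whereas the paper applies the same two observations once to the difference process.
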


\begin{remark}%
\label{re:H0=0}
If $\mathsf{P}(\gamma =0)=0$, then it follows from the first property for
$M''$ that $M''_{0}=0$ a.s. and thus the second property holds automatically.
\end{remark}

\begin{remark}%
\label{re:decomp}
The smallest filtration\index{filtration} making $\gamma $ a stopping time\index{stopping time} is a single jump
filtration $\mathbb{F}(\gamma ,\sigma \{\gamma \})$\index{single jump ! filtration} generated by
$\gamma $ and the smallest $\sigma $-field $\sigma \{\gamma \}$ with respect
to which $\gamma $ is measurable. Let $M$ be a $\mathbb{F}$-local martingale
adapted to $\mathbb{F}(\gamma ,\sigma \{\gamma \})$. It follows from Theorem~\ref{th:2}
that $M$ is a $\mathbb{F}(\gamma ,\sigma \{\gamma \})$-local martingale.
\end{remark}

As the next example shows, the product $M'M''$ of local martingales\index{local martingale} from
the above decomposition may not be a local martingale\index{local martingale} because the first
condition in \eqref{eq:integrab2} may fail. It will follow from Theorem \ref{th:sigmamart} below that this product is always a $\sigma $-martingale.

\begin{example}%
\label{ex:Emery}
Let $\gamma $ have an exponential distribution, e.g.,
${\overline{G}}(t)=e^{-t}$, $F$ is given by \eqref{eq:F} with
$H(t)=t^{-1/2}$ and an arbitrary $F(0)$,
$M'_{t}=F(t){\Eins }_{\{t<\gamma \}}+H(\gamma ){\Eins }_{\{t
\geqslant \gamma \}}$,
$M''_{t}=Y\gamma ^{-1/2}{\Eins }_{\{t\geqslant \gamma \}}$, where
$Y$ takes values $\pm 1$ with probabilities $1/2$ and is independent of
$\gamma $. It follows that $M'$ and $M''$ are local martingales\index{local martingale} but their
product
$M'_{t}M''_{t}=Y\gamma ^{-1}{\Eins }_{\{t\geqslant \gamma \}}$ does
not satisfy the integrability condition \eqref{eq:integrab} and, hence,
is not a local martingale.\index{local martingale} This process is a classical example (due to
\'{E}mery) of a $\sigma $-martingale which is not a local martingale,\index{local martingale} see,
e.g., \cite[Example 2.3, p.~86]{Gushchin:15}.
\end{example}

The previous example shows that our model admits $\sigma $-martingales
which are not local martingales.\index{local martingale} In the next theorem we describe all
$\sigma $-martingales in our model. In particular, it implies that
if ${\mathscr{F}}=\sigma \{\gamma \}$, then all $\sigma $-martingales
that are integrable at $0$ are local martingales.\index{local martingale}

\begin{thm}%
\label{th:sigmamart}
In order that a right-continuous process
$M=(M_{t})_{t\in \mathbb{R}_{+}}$ be a $\sigma $-martin\-gale it is necessary
and sufficient that it have a representation \eqref{eq:mr3}, where
a pair $(F,H)$ satisfies Condition M and a random variable $L'$ satisfies
%
\begin{equation}
\label{eq:integrabs}
{\mathsf{E}}\bigl [|L'|{\Eins }_{\{\gamma >0\}} \big |\gamma \bigr ]<
\infty \qquad \text{and}\qquad \mathsf{E}[L'|\gamma ]=0.
\end{equation}
\end{thm}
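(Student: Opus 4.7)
For the sufficiency direction I would decompose $M = M' + M''$, where $M'_t := F(t)\Eins_{\{t<\gamma\}} + H(\gamma)\Eins_{\{t\geqslant\gamma\}}$ and $M''_t := L'\Eins_{\{t\geqslant\gamma\}}$. Theorem~\ref{th:solutions} applied with ``$L'$'' in that theorem set to zero shows that $M'$ is a local martingale, hence a $\sigma$-martingale. To show $M''$ is a $\sigma$-martingale I would build a strictly positive predictable scaling: put $\zeta(s) := \mathsf{E}[|L'|\Eins_{\{\gamma>0\}}|\gamma=s]$ (finite $dG$-a.e.\ by~\eqref{eq:integrabs}), let $C(s) := (1+\zeta(s))^{-1}$ for $s>0$ with $C(0)$ any positive number, and take $\phi_t := C(t)$ (a deterministic, hence predictable, process). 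Then $N := \phi \cdot M''$ equals the single-jump process $C(\gamma)L'\Eins_{\{t\geqslant\gamma\}}$; the bound $C(\gamma)\zeta(\gamma)\leqslant 1$ and the integrability of $L'\Eins_{\{\gamma=0\}}$ (forced by $\mathsf{E}[L'|\gamma]=0$ when $G(0)>0$) give $\mathsf{E}|N_t|<\infty$, while $\mathsf{E}(N_t) = \mathsf{E}[C(\gamma)\mathsf{E}(L'|\gamma)\Eins_{\{\gamma\leqslant t\}}] = 0$. Theorem~\ref{th:2} then certifies $N$ as a local martingale, so $M''$ is a $\sigma$-martingale and so is $M = M' + M''$.

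For the necessity direction I would use the standard characterisation that $M$ is a $\sigma$-martingale iff there exists a strictly positive predictable process $\phi$ with $N := \phi \cdot M$ a local martingale. By Proposition~\ref{prop:fv}(ii), $\phi_t = C(t)$ on $\{t\leqslant\gamma\}$ for some Borel $C>0$ on $\mathcal{T}$; and since $M$ has finite variation by Proposition~\ref{prop:fv}(iv)--(v), the Stieltjes integral yields $N_t = \tilde F(t)\Eins_{\{t<\gamma\}} + \tilde L\,\Eins_{\{t\geqslant\gamma\}}$ with $\tilde F(t) = \int_{(0,t]} C(s)\,dF(s)$ and $\tilde L = \int_{(0,\gamma)} C\,dF + C(\gamma)(L-F(\gamma-))$. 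Theorem~\ref{th:solutions} applied to $N$ delivers a pair $(\tilde F,\tilde H)$ satisfying Condition~M, with $\tilde H(s) = \mathsf{E}[\tilde L|\gamma=s]$.

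From $\tilde F \overset{\mathrm{loc}}{\ll} G$ the identity $d\tilde F = C\,dF \ll dG$ combined with $C>0$ and a Hahn decomposition of $F$ forces $dF \ll dG$; together with the local finiteness of $\Var(F)$ this gives $F \overset{\mathrm{loc}}{\ll} G$. The integrability of $\tilde L$ on $\{\gamma\leqslant t\}$ and $C(\gamma)>0$ yield $\mathsf{E}[|L||\gamma] < \infty$ a.s.\ on $\{\gamma>0\}$, so $H(s) := \mathsf{E}[L|\gamma=s]$ and $L' := L - H(\gamma)$ are well defined and the properties $\mathsf{E}[L'|\gamma] = 0$ and $\mathsf{E}[|L'|\Eins_{\{\gamma>0\}}|\gamma] < \infty$ are immediate.

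The main obstacle is to verify that this $H$ lies in $L^1_{\mathrm{loc}}(dG)$ and that $(F,H)$ satisfies~\eqref{eq:main}, since the conditional integrability of $L$ alone does not deliver $\int_{(0,t]}|H|\,dG<\infty$. I would circumvent this by computing $H$ from $\tilde H$ rather than estimating directly: substituting $\tilde H(s) = \int_{(0,s)} C\,dF + C(s)(H(s) - F(s-))$ and the elementary identity $\tilde F(s) - \int_{(0,s)} C\,dF = C(s)(F(s) - F(s-))$ into the formula $\tilde H(s) = \tilde F(s) - \overline G(s-)(d\tilde F/dG)(s)$ supplied by Proposition~\ref{prop:solutions}(b) for $(\tilde F,\tilde H)$, then using $d\tilde F/dG = C \cdot dF/dG$ and cancelling the common factor $C(s) > 0$, I obtain $H(s) = F(s) - \overline G(s-)(dF/dG)(s)$ for $dG$-a.e.\ $s \in (0,t_G)$. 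Proposition~\ref{prop:solutions}(b) then certifies that $(F,H)$ satisfies Condition~M, with the Case~B boundary condition~\eqref{eq:mainB} derived similarly from the analogous identity $\tilde H(t_G) = \lim_{t\uparrow t_G}\tilde F(t)$.
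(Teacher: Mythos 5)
Your proof is correct in substance and its core computations coincide with the paper's, but you interface with the notion of $\sigma$-martingale differently. The paper works directly with its definition (an increasing sequence of predictable sets $\Sigma _{n}$ covering $\Omega \times \mathbb{R}_{+}$ with ${\Eins }_{\Sigma _{n}}\cdot M$ martingales): for sufficiency it exhibits the explicit sets $\Sigma _{n}=\Omega \times \{t\colon \mathsf{E}[|L'|\,|\gamma =t]\leqslant n\}$ and checks via Theorem~\ref{th:solutions} that each ${\Eins }_{\Sigma _{n}}\cdot M$ is a local martingale; for necessity it reduces the given $\Sigma _{n}$ to deterministic sets $D_{n}$ by Proposition~\ref{prop:fv}(ii) and runs exactly your cancellation argument with ${\Eins }_{D_{n}}$ in place of your weight $C$, using the covering property \eqref{eq:Dn} to recover \eqref{eq:H}, \eqref{eq:mainB} and $F\overset{\mathrm{loc}}{\ll }G$. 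You instead use a single strictly positive predictable weight together with the standard equivalence ``$M$ is a $\sigma $-martingale iff $\phi \cdot M$ is a local martingale for some strictly positive predictable $\phi $''. This buys a cleaner argument (no bookkeeping with a sequence of pairs $(F^{n},H^{n},L^{n})$, and positivity of $C$ makes the absolute-continuity and cancellation steps immediate), but it imports an equivalence that the paper never states or proves and that is usually formulated under the usual conditions, which this paper deliberately avoids; in the necessity direction this equivalence does real work (gluing the $\Sigma _{n}$ into one positive integrand), whereas in the sufficiency direction your weight $C=(1+\zeta )^{-1}$ converts back to the paper's definition at once, since the level sets $\{C\geqslant 1/n\}=\{\zeta \leqslant n-1\}$ are precisely the paper's $\Sigma _{n}$. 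If you want the argument to be self-contained relative to the paper's definition, either add that conversion explicitly or, for necessity, run your computation with the indicators ${\Eins }_{D_{n}}$ as the paper does. Minor points at the same level of informality as the paper: the treatment of the jump at $\gamma =0$ (the paper reduces to $M_{0}=0$) and setting $L'=(L-H(\gamma )){\Eins }_{\{\gamma <\infty \}}$ rather than $L-H(\gamma )$.
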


The next theorem complements the classification of the limit behaviour
of local martingales\index{local martingale} that was considered in Herdegen and Herrmann
\cite{HerdegenHerrmann:16} in the case where
${\mathscr{F}}=\sigma \{\gamma \}$. Let us say that a local martingale\index{local martingale}
$M=(M_{t})_{t\in \mathbb{R}_{+}}$ has
\begin{defenum}
\item[type 1] if the limit $M_{\infty }=\lim _{t\to \infty }M_{t}$ does not
exist with positive probability or exists with probability one but is not
integrable: $\mathsf{E}|M_{\infty }|= \infty $;
\item[type 2a] if $M$ is a closed supermartingale (in particular,
$\mathsf{E}|M_{\infty }|< \infty $) and
$\mathsf{E}(M_{\infty })< \mathsf{E}(M_{0})$;
\item[type 2b] if $M$ is a closed submartingale (in particular,
$\mathsf{E}|M_{\infty }|< \infty $) and
$\mathsf{E}(M_{\infty })> \mathsf{E}(M_{0})$;
\item[type 3] if $M$ is a uniformly integrable martingale (in particular,
$\mathsf{E}|M_{\infty }|< \infty $ and
$\mathsf{E}(M_{\infty })= \mathsf{E}(M_{0})$) and
$\mathsf{E}(\sup _{t} |M_{t}|)=\infty $;
\item[type 4] if $M$ has an integrable variation:
$\mathsf{E}\bigl (\Var (M)_{\infty }\bigr ) <\infty $.
\end{defenum}

\begin{thm}%
\label{th:class}
Let $M=(M_{t})_{t\in \mathbb{R}_{+}}$ be a local martingale\index{local martingale} with the representation
%
\begin{equation}
\label{eq:mrU}
M_{t}=F(t){\Eins }_{\{t<\gamma \}}+L{\Eins }_{\{t\geqslant
\gamma \}},\quad t\in \mathbb{R}_{+},
\end{equation}
where $L=H(\gamma )+L'$, a pair $(F,H)$ satisfies Condition M and a random
variable $L'$ satisfies \eqref{eq:integrab2}. Then in Case B the local
martingale $M$\index{local martingale} has type\/ {\textup{4}}. In Case A all types are possible. Namely,
\begin{enumerate}
\item[\textup{(i)}] $M$ has type\/ {\textup{1}} if and only if
$\mathsf{E}\bigl (|L'|{\Eins }_{\{\gamma <\infty \}}\bigr )=\infty $ or
$\int _{[0,t_{G})} |H(s)|\,dG(s)=\infty $.
\item[\textup{(ii)}] If $\mathsf{P}(\gamma =\infty )>0$,
$\mathsf{E}\bigl (|L'|{\Eins }_{\{\gamma <\infty \}}\bigr )<\infty $, and
$\int _{\mathbb{R}_{+}} |H(s)|\,dG(s)<\infty $ then $M$ has type\/ {\textup{4}}.
\item[\textup{(iii)}] If $\mathsf{P}(\gamma =\infty )=0$,
$\mathsf{E}|L'|<\infty $, and $\int _{[0,t_{G})} |H(s)|\,dG(s)<\infty $ then
\begin{enumerate}
\item[\textup{(iii.i)}] $M$ has type\/ {\textup{2a (}}resp., \textup{2b)} if and only
if $\lim _{t\upuparrows t_{G}} F(t){\overline{G}}(t) > 0$ \textup{(}resp.,
$\lim _{t\upuparrows t_{G}} F(t){\overline{G}}(t) < 0)$;
\item[\textup{(iii.ii)}] $M$ has type\/ {\textup{3}} if and only if
\begin{equation*}
\lim _{t\upuparrows t_{G}} F(t){\overline{G}}(t) = 0 \qquad \text{and}
\qquad \int _{[0,t_{G})} {\overline{G}}(s)\Bigl |\frac{dF}{dG}(s)\Bigr |
\,dG(s)=\infty ;
\end{equation*}
\item[\textup{(iii.iii)}] $M$ has type\/ {\textup{4}} if and only if
%
\begin{equation}
\label{eq:H1}
\int \limits _{[0,t_{G})} {\overline{G}}(s)\Bigl |\frac{dF}{dG}(s)
\Bigr |\,dG(s)<\infty .
\end{equation}
\end{enumerate}
\end{enumerate}
\end{thm}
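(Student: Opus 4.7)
The plan is to separate the two cases of the theorem, using the functional equation~\eqref{eq:main} as the main algebraic tool throughout. In Case~B, Condition~M directly forces $H$ to be $dG$-integrable over $[0,t_G]$ and $F$ to be of bounded total variation on $[0,t_G)$ with $\lim_{t\upuparrows t_G}F(t)=H(t_G)$; since $\{\gamma\leqslant t_G\}$ has full measure, \eqref{eq:integrab2} says $L'\in L^1$ and hence $L\in L^1$. The pathwise bound $\Var(M)_\infty\leqslant |F(0)|+\Var(F)_{[0,t_G)}+|L-F(\gamma-)|$ then yields $\mathsf{E}\Var(M)_\infty<\infty$, which is type~\textup{4}.

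For Case~A, the key algebraic identity is obtained by letting $t\upuparrows t_G$ in~\eqref{eq:main}:
\begin{equation*}
\lim_{t\upuparrows t_G}F(t)\overline{G}(t)=F(0)\overline{G}(0)+H(0)G(0)-\int_{[0,t_G)}H(s)\,dG(s)=\mathsf{E}M_0-\mathsf{E}M_\infty,
\end{equation*}
whenever the right-hand side is well-defined. Part~(i) reduces to showing that $\mathsf{E}|L|\Eins_{\{\gamma<\infty\}}<\infty$ if and only if both $\int_{[0,t_G)}|H|\,dG<\infty$ and $\mathsf{E}|L'|\Eins_{\{\gamma<\infty\}}<\infty$; this follows from $\mathsf{E}|H(\gamma)|\leqslant\mathsf{E}|L|$ (Jensen, using $\mathsf{E}[L|\gamma]=H(\gamma)$) together with the triangle inequalities $|L|\leqslant|H(\gamma)|+|L'|$ and $|L'|\leqslant|L|+|H(\gamma)|$. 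Under the negation of both conditions, $\lim_t F(t)$ exists (trivially when $\mathsf{P}(\gamma=\infty)=0$, and when $\mathsf{P}(\gamma=\infty)>0$ because $\overline{G}\geqslant\mathsf{P}(\gamma=\infty)>0$ so $F=\Phi/\overline{G}$ inherits the limit of $\Phi(t):=F(0)\overline{G}(0)-\int_{(0,t]}H\,dG$). Part~(ii) is an analogue of Case~B, since $\overline{G}$ bounded away from $0$ forces $F$ to be bounded and of bounded total variation on $[0,\infty)$.

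The heart of the proof is subcase~(iii), where $M_\infty=L$ a.s.\ and is integrable. A direct computation, exploiting the fact that $\mathscr{F}_s$-measurable random variables are constant on $\{s<\gamma\}$, yields
\begin{equation*}
M_s-\mathsf{E}[M_\infty|\mathscr{F}_s]=\frac{\lim_{u\upuparrows t_G}F(u)\overline{G}(u)}{\overline{G}(s)}\,\Eins_{\{s<\gamma\}},
\end{equation*}
whose sign on $\{s<\gamma\}$ is that of $\lim F\overline{G}$, proving (iii.i); when $\lim F\overline{G}=0$, $M=\mathsf{E}[L|\mathscr{F}_\cdot]$ is a uniformly integrable martingale. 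For (iii.ii)--(iii.iii) the critical identity, coming from Proposition~\ref{prop:solutions}(b) via $H(s)=F(s)-\overline{G}(s-)(dF/dG)(s)$ together with the jump relation $\Delta F(s)=(dF/dG)(s)\Delta G(s)$, is
\begin{equation*}
H(s)-F(s-)=-\overline{G}(s)\frac{dF}{dG}(s),\qquad 0<s<t_G.
\end{equation*}
Hence $\mathsf{E}|H(\gamma)-F(\gamma-)|\Eins_{\{\gamma<\infty\}}=\int_{[0,t_G)}\overline{G}|dF/dG|\,dG$, and a Fubini argument gives $\mathsf{E}\Var(F)_{(0,\gamma)}$ equal to the same integral. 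Combined with $\mathsf{E}[L'|\gamma]=0$ and $\mathsf{E}|L'|<\infty$, this produces the equivalence of~\eqref{eq:H1} with $\mathsf{E}\Var(M)_\infty<\infty$, giving~(iii.iii); when~\eqref{eq:H1} fails, $2\mathsf{E}\sup_t|M_t|\geqslant\mathsf{E}|\Delta M_\gamma|\geqslant\int_{[0,t_G)}\overline{G}|dF/dG|\,dG=\infty$ (the last step by conditional Jensen) forces type~\textup{3}, giving~(iii.ii).

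The main obstacle is the subtle interplay between $L'$ and $H(\gamma)$ in the jump term: the two bounds $\mathsf{E}|L-F(\gamma-)|\Eins_{\{\gamma<\infty\}}\geqslant\int\overline{G}|dF/dG|\,dG$ (conditional Jensen) and $\leqslant\mathsf{E}|L'|+\int\overline{G}|dF/dG|\,dG$ must be combined to show that~\eqref{eq:H1} is simultaneously \emph{necessary and sufficient} for both $\mathsf{E}\sup_t|M_t|<\infty$ and $\mathsf{E}\Var(M)_\infty<\infty$ under the blanket assumption $\mathsf{E}|L'|<\infty$. A secondary technical point is the endpoint behaviour: the atom of $\gamma$ at $0$, where $(dF/dG)(0)$ is not determined, and at $t_G$, where in Case~A one may have $\overline{G}(t_G-)=0$, both of which require separate treatment of the boundary integrals.
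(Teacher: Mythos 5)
Your proposal is correct and follows essentially the same route as the paper: Case B and (ii) via bounded variation of $F$ plus integrability of $L$, (i) via the integrability equivalence from conditional Jensen, (iii.i) by evaluating $M_s-\mathsf{E}[M_\infty|\mathscr{F}_s]$ on the atom $\{s<\gamma\}$ (the paper phrases this as the sign of $\mathsf{E}(L-M_0)=-\lim_{t\upuparrows t_G}F(t)\overline{G}(t)$), and (iii.ii)--(iii.iii) via the jump $\Delta M_\gamma=L-F(\gamma-)$, the identity \eqref{eq:H++}, conditional Jensen, and the Fubini computation of $\mathsf{E}\int_{(0,\gamma)}|dF/dG|\,dG$. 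Only a cosmetic slip: in part (i), when $\mathsf{P}(\gamma=\infty)=0$ what exists trivially is the a.s.\ limit $M_\infty=L$, not $\lim_{t}F(t)$ itself; the argument is otherwise the paper's.
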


\begin{remark}
It follows from \eqref{eq:main} that the limit
$\lim _{t\to t_{G}} F(t){\overline{G}}(t)$ in (iii.i) and (iii.ii) exists.
Also, $\int _{[0,t_{G})} |H(s)|\,dG(s)$ in (i)--(iii) is finite if only
if $F(t){\overline{G}}(t)$ has a finite variation\index{finite variation} over $[0,t_{G})$.
\end{remark}

\begin{remark}
It follows from Theorem \ref{th:class} that, in our model, every martingale
$M$ with $\mathsf{E}(\sup _{t} |M_{t}|)<\infty $ has an integrable total variation.
Of course, on general spaces, there exist martingales $M$\index{martingale} having finite
variation on compacts and such that\break
$\mathsf{E}(\sup _{t} |M_{t}|)<\infty $ and their total variation is not integrable,
see, e.g., \cite[Example 2.7, p.~103]{Gushchin:15}.
\end{remark}

\begin{example}%
\label{ex:D-G}
Assume that $H\colon (0,1)\to \mathbb{R}$ is a
\emph{monotone nondecreasing\/} function and, for definiteness, that it
is right-continuous. Then it is the upper quantile function of
$H(\gamma )$, where $\gamma $ is uniformly distributed on $(0,1)$. Assume
also that $H$ is integrable on $(0,1)$ and
$\int _{0}^{1} H(s)\,ds=0$, that is to say, that $H(\gamma )$ has zero mean.
Put
\begin{equation*}
F(t)= -(1-t)^{-1} \int \limits _{0}^{t} H(s)\,ds = (1-t)^{-1} \int
\limits _{t}^{1} H(s)\,ds.
\end{equation*}
We see that $F$ satisfying \eqref{eq:main} with $F(0)=0$ is the Hardy--Littlewood
maximal function corresponding to $H$. If we define $M$ by \eqref{eq:mrU} with $L=H(\gamma )$, then, by
Theorem~\ref{th:class},
$M$ is a uniformly integrable martingale with $M_{\infty }=H(\gamma )$ and
$\sup _{t} M_{t} = F(\gamma )$. This example is essentially the example
of Dubins and Gilat \cite{DubinsGilat} of a uniformly integrable martingale
with a given distribution of its terminal value, having maximal (with respect
to the stochastic partial order) maximum (in time).
\end{example}

\begin{example}[\xch{{\cite[Example 3.14]{HerdegenHerrmann:16}}}{\cite{HerdegenHerrmann:16}, Example 3.14}]%
\label{ex:usualcond}
Let $\Omega =(0,1]$ be equipped with the Borel $\sigma $-field
${\mathscr{F}}$, and let $\mathsf{P}$ be the Lebesgue measure,
$\gamma (\omega )=\omega $. Put $H(t)\equiv 0$. Then
$F(t)=(1-t)^{-1}$ satisfies \eqref{eq:main} with $F(0)=1$. By Theorem~\ref{th:class},
$M$ defined by \eqref{eq:mrU} is a supermartingale\index{supermartingale} and local martingale\index{local martingale}
but not a martingale.\index{martingale} This seems to be the simplest example of a local
martingale\index{local martingale} with continuous time, which is not a martingale.\index{martingale} Note that,
for $\omega =1$, the trajectory
$M_{t}(\omega )=(1-t)^{-1}{\Eins }_{\{t<1\}}$ has not a finite left-hand
limit at $1$. Moreover, if $N$ is a modification of $M$, for $t<1$, the
values of $M_{t}(\omega )$ and $N_{t}(\omega )$ must coincide on the atom
$\{t<\gamma \}=(t,1]$ of ${\mathscr{F}}_{t}$, having the positive measure.
Hence, $N_{t}(\omega )=M_{t}(\omega )$ for $\omega =1$ for all $t<1$. This
is an example of a right-continuous supermartingale which has not a modification
with\/ \emph{all} \xch{paths c\`{a}dl\`{a}g}{c\`{a}dl\`{a}g paths}. Of course, the usual assumptions are
not satisfied in this example.
\end{example}


\section{Proofs}
\label{sec:4}

\begin{proof}[Proof of Proposition~\ref{prop:1}]
(i) and (iii) are evident from the definition of
${\mathscr{F}}_{t}$, and (ii) follows easily from (i).

Let us prove (iv). To prove that $T$ is a stopping time,\index{stopping time} we must check
that $\{T\leqslant t<\gamma \}$ is either $\varnothing $ or
$\{t<\gamma \}$ for all $t\in \mathbb{R}_{+}$. This is trivial if
$\{T<\gamma \}=\varnothing $. If there is a number $r$ such that \eqref{eq:NS-ST} holds, then $\{T\leqslant t<\gamma \}$ is either
$\varnothing $ if $r>t$ or $\{t<\gamma \}$ if $r\leqslant t$.

Conversely, let $T$ be a stopping time.\index{stopping time} If $T\geqslant \gamma $ for all
$\omega $, then there is nothing to prove. Assume that the set
$\{T<\gamma \}\neq \varnothing $. Then there are real numbers $q$ such
that $\{T\leqslant q<\gamma \}\neq \varnothing $. For such $q$, by the
definition of ${\mathscr{F}}_{q}$,
$\{T\leqslant q<\gamma \}=\{q<\gamma \}$, or, equivalently,
$\{T\leqslant q\}\supseteq \{q<\gamma \}$. Let $r$ be the greatest lower
bound of such $q$. The sets $\{q<\gamma \}\uparrow \{r<\gamma \}$ and
$\{T\leqslant q\}\downarrow \{T\leqslant r\}$ as $q\downarrow r$. Thus,
\begin{equation*}
\{T<\gamma \}=\bigcup _{q\colon \{T\leqslant q<\gamma \}\neq
\varnothing }\{q<\gamma \}=\{r<\gamma \}\subseteq \{T\leqslant r\}.
\end{equation*}
Since $\{T\leqslant t<\gamma \}=\varnothing $ for any $t<r$, we have \eqref{eq:NS-ST}.
\end{proof}

\begin{proof}[Proof of Proposition~\ref{prop:fv}]

The first statement in (i) follows from Proposition~\ref{prop:1} (i). Since
$\mathsf{P}(t<\gamma \wedge t_{G})>0$ for every $t<t_{G}$, we obtain that
$X_{t}$ and $Y_{t}$ take the same constant value on
$\{t<\gamma \wedge t_{G}\}$.

Since a random variable $Y_{t}$ is ${\mathscr{F}}_{t-}$-measurable for
a predictable process $Y$, $Y_{t}$ is constant on
$\{t\leqslant \gamma \}$. Denote by $C(t)$, $t\in {\mathcal{T}}$, the
value of $Y_{t}$ on $\{t\leqslant \gamma \}$. Since
$\mathsf{P}(\gamma \geqslant t)>0$ for $t\in {\mathcal{T}}$, there is an
$\omega $ such that $C(s)\equiv Y_{s}(\omega )$, $s\leqslant t$, and the
measurability of $C$ follows.

Let us prove (iii) in Case B. Then we obtain that $X_{t}=F(t)$ for all
$t<t_{G}$ on the set $\{\gamma =t_{G}\}$, which has a positive probability.
However, almost all paths of $X_{t}$ have a finite variation\index{finite variation} over
$[0,t_{G})$, and the claim follows. The proof in Case A is similar.

Now let us prove \eqref{eq:mr} in the case where
$M=(M_{t})_{t\in \mathbb{R}_{+}}$ is a uniformly integrable (a.s. c\`{a}dl\`{a}g\index{c\`{a}dl\`{a}g})
martingale.\index{martingale} We can find a random variable $M_{\infty }$ that is
${\mathscr{F}}_{\infty }$-measurable and such that
$\lim _{n\to \infty }M_{n} = M_{\infty }$ $\mathsf{P}$-a.s. Since
$\{t<\gamma \}$ is an atom of ${\mathscr{F}}_{t}$ and has a positive probability
for $t<t_{G}$, we obtain from the martingale property that
$M_{t}(\omega )= F(t)$ for all $\omega \in \{t<\gamma \}$, where
\begin{equation*}
\label{eq:F(t)}
F(t)=
\frac{\mathsf{E}\bigl (M_{\infty }{\Eins }_{\{t<\gamma \}}\bigr )}{{\overline{G}}(t)},
\quad t<t_{G}.
\end{equation*}
It is clear that the nominator and the denominator are right-continuous
functions of bounded variation on $[0,t_{G}]$, hence $F(t)$,
$0\leqslant t<t_{G}$ is a c\`{a}dl\`{a}g function\index{c\`{a}dl\`{a}g ! function} on ${\mathcal{T}}$ and has
a finite variation\index{finite variation} on $[0,t_{G})$ in Case B and on every $[0,t]$,
$0\leqslant t<t_{G}$, in Case A.

Now set $L=M_{\infty }{\Eins }_{\{\gamma <\infty \}}$. Then
$L{\Eins }_{\{\gamma \leqslant t\}}=M_{\infty }{\Eins }_{\{
\gamma \leqslant t\}}$ is ${\mathscr{F}}_{t}$-measurable, and hence
$\mathsf{P}$-a.s.
\begin{equation*}
M_{t}{\Eins }_{\{\gamma \leqslant t\}}=\mathsf{E}(M_{\infty }{
\Eins }_{\{\gamma \leqslant t\}}|{\mathscr{F}}_{t})=L{\Eins }_{
\{\gamma \leqslant t\}}.
\end{equation*}
Thus we have obtained, that, for a given $t\in \mathbb{R}_{+}$,
$M_{t}$ is equal $\mathsf{P}$-a.s. to the right-hand side of \eqref{eq:mr} with $L$ and $F(t)$ as above. Since both the left-hand side
and the right-hand side of \eqref{eq:mr} are almost surely right-continuous,
they are indistinguishable. Moreover, if we change $F(t)$ for
$t\geqslant t_{G}$, the right-hand side of \eqref{eq:mr} will change on
an evanescent set.\index{evanescent set} Thus we can put, say, $F(t)=0$ for
$t\geqslant t_{G}$, and then the right-hand side of \eqref{eq:mr} is a
regular right-continuous process with finite variation,\index{finite variation} and indistinguishable
from $M$.

Now let $M$ be a local martingale\index{local martingale} and $\{T_{n}\}$ be a localizing sequence
of stopping times,\index{stopping time} i.e.\ $T_{n}\uparrow \infty $ a.s.\ and
$M^{T_{n}}$ is a uniformly integrable martingale for each $n$. We have
proved that almost all paths of $M^{T_{n}}$ have finite variation.\index{finite variation} It follows
that almost all paths of $M$ have finite variation.\index{finite variation} This proves (iv).

Next, let $M$ be a $\sigma $-martingale, 
i.e. $M$ is a semimartingale\index{semimartingale}
and there is an increasing sequence of predictable sets\index{predictable sets}
$\Sigma _{n}$ such that
$\cup _{n} \Sigma _{n}=\Omega \times \mathbb{R}_{+}$ and the integral
process ${\Eins }_{\Sigma _{n}}\cdot M$ is a uniformly integrable martingale
for every $n$. It does not matter if we integrate over $[0,t]$ or
$(0,t]$, so let us agree that the domain of integration does not include
$0$. Since the integrand is bounded and every semimartingale\index{semimartingale} is a process
with finite variation\index{finite variation} in our model, the integral can be considered in the
Lebesgue--Stieltjes sense, as well as other integrals appearing in the
proof. Since ${\Eins }_{\Sigma _{n}}\cdot M$ is stopped at
$\gamma $ for every $n$ with probability one, we have
\begin{equation*}
\int {\Eins }_{\lrb \gamma ,\infty \rlb \cap \Sigma _{n}}(t)\,d
\Var (M)_{t} = \int {\Eins }_{\lrb \gamma ,\infty \rlb }(t)\,d
\Var ({\Eins }_{\Sigma _{n}}\cdot M)_{t} = 0 \quad \mathsf{P}
\text{-a.s.}
\end{equation*}
for every $n$, therefore,
\begin{equation*}
\int {\Eins }_{\lrb \gamma ,\infty \rlb }(t)\,d\Var (M)_{t} = 0
\quad \mathsf{P}\text{-a.s.}
\end{equation*}
Combining with (i), we prove representation \eqref{eq:mr}.
\end{proof}

\begin{remark}%
\label{re:UImod}
As it was already explained in the introduction, we can prove directly,
without assuming that paths are a.s. c\`{a}dl\`{a}g,\index{c\`{a}dl\`{a}g} that any uniformly integrable
martingale has a regular modification. The proof is essentially the same
as above where we proved that a.s. c\`{a}dl\`{a}g\index{c\`{a}dl\`{a}g} uniformly integrable martingale
has representation \eqref{eq:mr}.
\end{remark}

\begin{proof}[Proof of Theorem~\ref{th:2}]
First, we prove that statements (ii) and (iii) are equivalent. The implication
(ii)$\Rightarrow $(iii) follows trivially from the definition of a martingale.\index{martingale}
Conversely, let (iii) hold. The process
$(M_{t})_{t\in {\mathcal{T}}}$ is right-continuous, adapted by Proposition \ref{prop:1} (i), and integrable, see \eqref{eq:integrab}. Moreover, due
to \eqref{eq:mr2},
\begin{equation*}
M_{t}-M_{s} = 0 \quad \text{on}\ \{s\geqslant \gamma \},
\end{equation*}
where $0\leqslant s<t\in {\mathcal{T}}$. Hence,
\begin{equation*}
{\mathsf{E}}[M_{t}-M_{s}|{\mathscr{F}}_{s}] = 0 \quad \text{on}\ \{s
\geqslant \gamma \}.
\end{equation*}
But $\mathsf{E}[M_{t}-M_{s}|{\mathscr{F}}_{s}]$ is
${\mathscr{F}}_{s}$-measurable and, thus, equals a constant on
$\{s <\gamma \}$. And this constant must be zero since
$\mathsf{E}(M_{t}-M_{s})=0$ by \eqref{eq:main+}.

The implication (ii)$\Rightarrow $(i) is trivial if $t_{G}=\infty $ or
$t_{G}\in {\mathcal{T}}$. So we assume that $t_{G}<\infty $ and
${\overline{G}}(t)\downarrow 0$ as $t\upuparrows t_{G}$. Let
$t_{1}<\cdots <t_{n}<\cdots <t_{G}$, $t_{n}\to t_{G}$, be an increasing sequence,
then ${\overline{G}}(t_{n})\to 0$. Put
\begin{equation*}
T_{n} = \left \{
\begin{array}{ll}
t_{n}, & \text{if $\gamma >t_{n}$;}
\\
+\infty , & \text{otherwise.}
\end{array}
\right .
\end{equation*}
Then $T_{n}$ is a stopping time\index{stopping time} by Proposition \ref{prop:1} (iv),
$T_{n}\uparrow \infty $ a.s., and
$M_{t\wedge T_{n}} = M_{t\wedge t_{n}}$. Hence, $M^{T_{n}}$ is a martingale\index{martingale}
and $M$ is a local martingale.\index{local martingale}

It remains to prove the implication (i)$\Rightarrow $(ii). Let
$M=(M_{t})_{t\in \mathbb{R}_{+}}$ be a local martingale\index{local martingale} with a localizing
sequence $\{T_{n}\}$, i.e.\ $T_{n}\uparrow \infty $ a.s.\ and
$M^{T_{n}}$ is a uniformly integrable martingale for each $n$. If
$\mathsf{P}(T_{n}\geqslant \gamma )=1$ for some $n$, then $M=M^{T_{n}}$ is
a uniformly integrable martingale, and there is nothing to prove. So assume
that $\mathsf{P}(T_{n}<\gamma )>0$ for all $n$. By Proposition \ref{prop:1} (iv), there is a number $r_{n}$ such that
$\{T_{n}<\gamma \}=\{T_{n}=r_{n}<\gamma \}=\{r_{n}<\gamma \}$. It follows
from $\mathsf{P}(r_{n}<\gamma )>0$ that $r_{n}<t_{G}$. In Case B we get
$\mathsf{P}(T_{n}<\gamma ) = \mathsf{P}(r_{n}<\gamma )\geqslant {\mathsf{P}}(
\gamma =t_{G})>0$ for every $n$, a contradiction with
$T_{n}\to \infty $ a.s. In Case A, if $\mathsf{P}(\gamma =\infty )>0$, then
it follows from $T_{n}\to \infty $ a.s. that $r_{n}\to \infty $. In remaining
cases where $\mathsf{P}(\gamma =t_{G})=0$, we obtain from
$\mathsf{P}(r_{n}<\gamma )\to 0$ that $r_{n}\to t_{G}$, $n\to \infty $. The
claim follows since $M_{t\wedge T_{n}} = M_{t\wedge r_{n}}$, and hence
$(M_{t})_{t\leqslant r_{n}}$ is a martingale.\index{martingale}
\end{proof}

\begin{proof}[Proof of Proposition~\ref{prop:solutions}]
(a) It is obvious that \eqref{eq:main} is equivalent to \eqref{eq:F}. It
also follows from \eqref{eq:main} that in Case B \eqref{eq:mainB} is equivalent
to \eqref{eq:F0}. Thus it remains to prove that $F$ defined in (a) satisfies
$F\overset{\mathrm{loc}}{\ll }G$. Since
${\overline{G}}(s)\geqslant {\overline{G}}(t)>0$ for any $s<t<t_{G}$, we
have
\begin{equation*}
\frac{1}{{\overline{G}}(t)} = \frac{1}{{\overline{G}}(0)} + \int
\limits _{(0,t]}\frac{1}{{\overline{G}}(s){\overline{G}}(s-)}\,dG(s),
\quad t<t_{G}.
\end{equation*}
On the other hand, from \eqref{eq:F}
\begin{equation*}
F(t){\overline{G}}(t) = F(0){\overline{G}}(0) - \int \limits _{(0,t]}H(s)
\,dG(s), \quad t<t_{G}.
\end{equation*}
Combining, we obtain from integration by parts that
\begin{equation*}
F(t)= F(t){\overline{G}}(t)\frac{1}{{\overline{G}}(t)} = F(0) - \int
\limits _{(0,t]}\frac{H(s)}{{\overline{G}}(s-)}\,dG(s) + \int \limits _{(0,t]}
\frac{F(s)}{{\overline{G}}(s-)}\,dG(s), \quad t<t_{G}.
\end{equation*}
This shows that $F\overset{\mathrm{loc}}{\ll }G$ in Case A. In Case B we must
show additionally that the function
$\frac{|F(s)|+|H(s)|}{{\overline{G}}(s-)}$ is $dG$-integrable over
$(0,t_{G})$. But
$1/{\overline{G}}(s-)\leqslant 1/{\mathsf{P}}(\gamma =t_{G})$,
$s\leqslant t_{G}$, and $F(s)$ is bounded on $[0,t_{G})$ in view of \eqref{eq:F}. The claim follows.

(b) It is clear that the function $H(t)$, $t\in {\mathcal{T}}$, defined
as in the statement, belongs to $L^{1}_{\mathrm{loc}}(dG)$. Integrating by parts,
we get, for $t\in [0,t_{G})$,
\begin{align*}
F(t){\overline{G}}(t) &= F(0){\overline{G}}(0)-\int \limits _{(0,t]} F(s)
\,dG(s) + \int \limits _{(0,t]} {\overline{G}}(s-)\,dF(s)
\\
& = F(0){\overline{G}}(0)-\int \limits _{(0,t]} F(s)\,dG(s) + \int
\limits _{(0,t]} {\overline{G}}(s-)\frac{dF}{dG}(s)\,dG(s)
\\
& = F(0){\overline{G}}(0)-\int \limits _{(0,t]} H(s)\,dG(s),
\end{align*}
i.e. \eqref{eq:main} holds. Therefore, Condition M is satisfied. Conversely,
let \eqref{eq:main} hold. In the proof of part (a) we deduced from \eqref{eq:F} (and, hence, from \eqref{eq:main}) that
\begin{equation*}
\frac{dF}{dG}(t) = -\frac{H(t)}{{\overline{G}}(t-)} +
\frac{F(t)}{{\overline{G}}(t-)},\quad dG\text{-a.s.},
\end{equation*}
and \eqref{eq:H} follows.
\end{proof}

\begin{proof}[Proof of Theorem~\ref{th:solutions}]
Let $M=(M_{t})_{t\in \mathbb{R}_{+}}$ be a local martingale.\index{local martingale} By Proposition~\ref{prop:fv}
(v) and Theorem \ref{th:2}, $M$ has representation \eqref{eq:mr2} and,
moreover, \eqref{eq:integrab} and \eqref{eq:main+} hold. Define the function
$H(t)$, $t\in {\mathcal{T}}$, by \eqref{eq:Hdef1}. Then, see \eqref{eq:integrabL},
\begin{equation*}
{\mathsf{E}}\bigl (|H(\gamma )|{\Eins }_{\{\gamma \leqslant t\}} \bigr )
\leqslant {\mathsf{E}}\bigl (|L|{\Eins }_{\{\gamma \leqslant t\}}
\bigr )<\infty ,
\end{equation*}
which implies $H\in L^{1}_{\mathrm{loc}}(dG)$. Putting
$L'=\bigl (L-H(\gamma )\bigr ){\Eins }_{\{\gamma <\infty \}}$, we obtain \eqref{eq:integrab2} as well. Now it follows from \eqref{eq:mr3} and the
second relation in \eqref{eq:integrab2} that
\begin{equation*}
\label{eq:main++}
{\mathsf{E}}(M_{t}) = F(t){\overline{G}}(t) + \int \limits _{[0,t]}H(s)\,dG(s),
\quad t\in {\mathcal{T}},
\end{equation*}
so \eqref{eq:main} and \eqref{eq:mainB} follow from \eqref{eq:main+}. Finally, \eqref{eq:MF} follows from Proposition~\ref{prop:solutions} (a).

Conversely, let \eqref{eq:mr3} hold true with a pair $(F,H)$ satisfying
Condition M and a random variable $L'$ satisfying \eqref{eq:integrab2}. Then, putting $L=H(\gamma )+L'$, we obtain \eqref{eq:mr2} and \eqref{eq:integrab}. It remains to note that \eqref{eq:main} and \eqref{eq:mainB} (in case B) imply \eqref{eq:main+}.
\end{proof}

\begin{proof}[Proof of Corollary~\ref{co:decomp}]
The required decomposition follows from \eqref{eq:mr3} if we put
\begin{equation*}
M'_{t}=F(t){\Eins }_{\{t<\gamma \}}+H(\gamma ){\Eins }_{\{t
\geqslant \gamma \}},\qquad M''_{t}=L'{\Eins }_{\{t\geqslant
\gamma \}}.
\end{equation*}
Let a local martingale\index{local martingale} $M$ with a representation \eqref{eq:mr3} vanish
on $\{t<\gamma \}$ and $\mathsf{E}M_{0} = 0$. Then $F(t)\equiv 0$ for
$t<t_{G}$ and
$0=\mathsf{E}M_{0} =H(0){\overline{G}}(0)+\mathsf{E}\bigl (L'{\Eins }_{\{
\gamma =0\}}\bigr )=H(0){\overline{G}}(0)$ in view of the second relation
in \eqref{eq:integrab2}. By Theorem~\ref{th:solutions}, it follows from \eqref{eq:main} and \eqref{eq:mainB} that $H(t)=0$ $dG$-a.s. Now, if
$M$ is also adapted with respect to the smallest filtration\index{filtration} making
$\gamma $ a stopping time,\index{stopping time} then
$M_{\gamma }= \bigl (H(\gamma )+L'\bigr ){\Eins }_{\{\gamma <\infty
\}} =L'{\Eins }_{\{\gamma <\infty \}}$ is $\sigma \{\gamma \}$-measurable.
Using again the second relation in \eqref{eq:integrab2}, we conclude that
$L'{\Eins }_{\{\gamma <\infty \}}=0$ a.s. This proves the unicity.
\end{proof}

\begin{proof}[Proof of Theorem~\ref{th:sigmamart}]
To prove sufficiency it is enough to consider the case, where
$H\equiv 0$ and $F\equiv 0$. In view of the first condition in \eqref{eq:integrabs}, there exists a Borel function
$J\colon (0,\infty ]\to \mathbb{R}_{+}$ such that
$\mathsf{E}\bigl [|L'| \big |\gamma =t\bigr ]=J(t)$. Put
$\Sigma _{n}=\Omega \times \{t\in (0,+\infty )\colon J(t)\leqslant n
\}$ and consider the Lebesgue--Stieltjes integral process
${\Eins }_{\Sigma _{n}}\cdot M_{t} = L'{\Eins }_{\bigl \{\mathsf{E}
\bigl [|L'| \big |\gamma \bigr ]\leqslant n\bigr \}}{\Eins }_{\{t
\geqslant \gamma >0\}}$. By Theorem~\ref{th:solutions}, cf.\ condition \eqref{eq:integrab2}, it is a local martingale.\index{local martingale} Since $\Sigma _{n}$ are
predictable and
$\cup _{n} \Sigma _{n}=\Omega \times \mathbb{R}_{+}$, $M$ is a
$\sigma $-martingale.

Conversely, let $M$ be a $\sigma $-martingale. It is easy to check that
to prove necessity it is enough to consider the case $M_{0}=0$. According
to Proposition~\ref{prop:fv} (v) and Remark~\ref{re:Ffv}
%
\begin{equation}
\label{eq:mrs}
M_{t}=F(t){\Eins }_{\{t<\gamma \}}+L{\Eins }_{\{t\geqslant
\gamma \}},
\end{equation}
where $L$ is a random variable, $F(t)$, $0\leqslant t<t_{G}$, is a deterministic
function with finite variation\index{finite variation} over $[0,t]$ for every $t<t_{G}$ in case
A and over $[0,t_{G})$ in Case B. By the definition of $\sigma $-martingales,
there is an increasing sequence of predictable sets\index{predictable sets} $\Sigma _{n}$ such
that $\cup _{n} \Sigma _{n}=\Omega \times \mathbb{R}_{+}$ and the integral
process ${\Eins }_{\Sigma _{n}}\cdot M$ is a local martingale\index{local martingale} for every
$n$. It was mentioned in the proof of Proposition~\ref{prop:fv} that the
integral is understood as the Lebesgue--Stieltjes integral. By Proposition~\ref{prop:fv}
(ii), there are Borel subsets $D_{n}$ of $\mathbb{R}_{+}$ such that
${\Eins }_{\Sigma _{n}}(\omega ,t){\Eins }_{\{\gamma (\omega )
\geqslant t\}} = {\Eins }_{D_{n}}(t){\Eins }_{\{\gamma (
\omega )\geqslant t\}}$, in particular,
%
\begin{equation}
\label{eq:Dn}
\cup _{n} D_{n}\supseteq {\mathcal{T}}.
\end{equation}
According to Theorem \ref{th:solutions} and Proposition~\ref{prop:solutions},
${\Eins }_{\Sigma _{n}}\cdot M$ has a representation
\begin{equation*}
{\Eins }_{\Sigma _{n}}\cdot M_{t}= F^{n}(t){\Eins }_{\{t<
\gamma \}}+\bigl (H^{n}(\gamma )+L^{n}\bigr ){\Eins }_{\{t
\geqslant \gamma \}}
\end{equation*}
where
$\mathsf{E}\bigl (|L^{n}| {\Eins }_{\{\gamma \leqslant t\}}\bigr )<
\infty $, $t\in {\mathcal{T}}$, $\mathsf{E}[L^{n}|\gamma ]=0$,
$H^{n}\in L^{1}_{\mathrm{loc}}(dG)$, $F^{n}\overset{\mathrm{loc}}{\ll }G$,
%
\begin{equation}
\label{eq:Hn0}
H^{n}(t)=F^{n}(t)-{\overline{G}}(t-)\frac{dF^{n}}{dG}(t) = F^{n}(t-)-{
\overline{G}}(t)\frac{dF^{n}}{dG}(t),
\end{equation}
$0< t<t_{G}$, and in Case B
$H^{n}(t_{G}):=\lim _{t\upuparrows t_{G}} F^{n}(t)$.

Combining with \eqref{eq:mrs}, we get
%
\begin{equation}
\label{eq:Fn}
F^{n}(t)=\int _{(0,t]}{\Eins }_{D_{n}}(s)\,dF(s),\quad 0<t<t_{G},
\end{equation}
and
%
\begin{equation}
\label{eq:Hn}
H^{n}(\gamma )+L^{n}=\int _{(0,\gamma )}{\Eins }_{D_{n}}(s)\,dF(s)+{
\Eins }_{D_{n}}(\gamma )\bigl (L-F(\gamma -)\bigr )\quad
\text{a.s.}
\end{equation}
Since $F^{n}\overset{\mathrm{loc}}{\ll }G$, it follows from \eqref{eq:Fn} and \eqref{eq:Dn} that $F\overset{\mathrm{loc}}{\ll }G$. Substituting \eqref{eq:Fn} in \eqref{eq:Hn} and taking conditional expectation given
$\gamma $, we get
\begin{equation*}
H^{n}(\gamma )-F^{n}(\gamma -) = {\Eins }_{D_{n}}(\gamma )\bigl (H(
\gamma )-F(\gamma -)\bigr )\quad \text{a.s.,}
\end{equation*}
i.e.
%
\begin{equation}
\label{eq:HnH}
H^{n}(t)-F^{n}(t-)={\Eins }_{D_{n}}(t)\bigl (H(t)-F(t-)\bigr )
\quad dG(t)\text{-a.s.}
\end{equation}
It follows from \eqref{eq:Fn} and \eqref{eq:Hn0} that
\begin{equation*}
-{\overline{G}}(t){\Eins }_{D_{n}}(t) \frac{dF}{dG}(t) = -{
\overline{G}}(t)\frac{dF^{n}}{dG}(t)= {\Eins }_{D_{n}}(t)\bigl (H(t)-F(t-)
\bigr ) \quad dG(t)\text{-a.s.},
\end{equation*}
so, taking \eqref{eq:Dn} into account, we obtain
\begin{equation*}
H(t)= F(t)-{\overline{G}}(t-)\frac{dF}{dG}(t) \quad dG(t)\text{-a.s.}
\end{equation*}
Additionally, in Case B, the left-hand side of \eqref{eq:HnH} at
$t=t_{G}$ vanishes, hence,
$H(t_{G}):=\lim _{t\upuparrows t_{G}} F(t)$. It remains to put
$L'=L-H(\gamma )$.
\end{proof}

\begin{proof}[Proof of Theorem~\ref{th:class}]
In Case B
\begin{equation*}
\Var (M)_{\infty }\leqslant 2\Var (F)_{t_{G}-}+|L|,
\end{equation*}
and the first term is finite by Remark \ref{re:Ffv}, while
$\mathsf{E}|L|<\infty $ due to \eqref{eq:integrab}. Thus, we proceed to Case
A.

(i) Note that
\begin{equation*}
\int \limits _{[0,t_{G})} |H(s)|\,dG(s) = \mathsf{E}\bigl (|H(\gamma )|{
\Eins }_{\{\gamma <t_{G}\}}\bigr )=\mathsf{E}\bigl (|H(\gamma )|{
\Eins }_{\{\gamma <\infty \}}\bigr )
\end{equation*}
and
$\mathsf{E}\bigl (|L|{\Eins }_{\{\gamma <\infty \}}\bigr )<\infty $ if and
only if
\begin{equation*}
{\mathsf{E}}\bigl (|H(\gamma )|{\Eins }_{\{\gamma <\infty \}}\bigr )<
\infty \qquad \text{and}\qquad \mathsf{E}\bigl (|L'|{\Eins }_{\{\gamma <
\infty \}}\bigr )<\infty .
\end{equation*}
Next, if $M_{\infty }$ is well defined, then
\begin{equation*}
M_{\infty }=L{\Eins }_{\{\gamma <\infty \}}+\lim _{t\to \infty }F(t)
{\Eins }_{\{\gamma =\infty \}}.
\end{equation*}
Finally, if $\mathsf{P}(\gamma =\infty )>0$, then it follows from \eqref{eq:main} that $\lim _{t\to \infty }F(t)$ exists and is finite if
$\int \limits _{[0,t_{G})} |H(s)|\,dG(s) <\infty $. Now, combining all
above, we arrive at~(i).

(ii) If $\mathsf{P}(\gamma =\infty )>0$, then
\begin{equation*}
\Var (M)_{\infty }\leqslant 2\Var (F)_{\infty }+|L|{\Eins }_{\{
\gamma <\infty \}},
\end{equation*}
and the last term on the right has finite expectation by assumptions. Since
${\overline{G}}(t)\geqslant {\mathsf{P}}(\gamma =\infty )>0$ in the case under
consideration, it follows from assumptions and \eqref{eq:main} that
$F$ has a finite variation\index{finite variation} over $\mathbb{R}_{+}$.

From now on we assume that
$\mathsf{E}\bigl (|L'|{\Eins }_{\{\gamma <\infty \}}\bigr )<\infty $,
$\int _{[0,t_{G})} |H(s)|\,dG(s)<\infty $, and
$\mathsf{P}(\gamma =t_{G})=0$. Then $M$ is a martingale\index{martingale} on $[0,t_{G})$ by
Theorem~\ref{th:2} and it coincides with $M_{\infty }=L$ for
$t\geqslant t_{G}$. Hence, it is a (necessarily closed) submartingale\index{submartingale} (resp.\ supermartingale\index{supermartingale})
if and only if $\mathsf{E}[L-M_{t}|{\mathscr{F}}_{t}]\geqslant 0$ (resp.\ $
\leqslant 0$), for $t<t_{G}$. As in the proof of Theorem~\ref{th:2},
\begin{equation*}
L-M_{t} = 0 \quad \text{on}\ \{t\geqslant \gamma \},
\end{equation*}
hence,
\begin{equation*}
{\mathsf{E}}[L-M_{t}|{\mathscr{F}}_{t}] = \mathrm{const}{\Eins }_{\{t<
\gamma \}}.
\end{equation*}
Taking expectations, we see that this constant has the same sign as
$\mathsf{E}(L-M_{t}) = \mathsf{E}(L-M_{0})$. However,
\begin{equation*}
{\mathsf{E}}(L-M_{0}) =\mathsf{E}\bigl (H(\gamma )-M_{0}\bigr )= \int \limits _{(0,t_{G})}
H(s)\,dG(s) - F(0){\overline{G}}(0) = -\lim _{t\upuparrows t_{G}} F(t){
\overline{G}}(t),
\end{equation*}
and (iii.i) follows.

The same proof shows that if
$\int _{(0,t_{G})} H(s)\,dG(s) = F(0){\overline{G}}(0)$, then
$M$ is a uniformly integrable martingale. \xch{Therefore,}{Therefore.} to prove (iii.ii)
and (iii.iii) it is enough to show that
$\mathsf{E}(\sup _{t} |M_{t}|)<\infty $ implies \eqref{eq:H1}, and that \eqref{eq:H1} implies $\mathsf{E}\bigl (\Var (M)_{\infty }\bigr ) <\infty $.

If $M$ is a local martingale\index{local martingale} with
$\mathsf{E}(\sup _{t} |M_{t}|)<\infty $, then
$\mathsf{E}\bigl (|\Delta M_{\gamma }|{\Eins }_{\{\gamma <\infty \}}
\bigr )<\infty $. But
$|\Delta M_{\gamma }|{\Eins }_{\{\gamma <\infty \}}=|L-F(\gamma -)|{
\Eins }_{\{\gamma <\infty \}}$, hence, taking conditional expectation
given $\gamma $, we get
\begin{equation*}
\int \limits _{[0,t_{G})} |H(s) - F(s-)|\,dG(s)<\infty .
\end{equation*}
In view of \eqref{eq:H++} which is equivalent to \eqref{eq:H}, we obtain \eqref{eq:H1}.

Conversely, let \eqref{eq:H1} hold. Then
\begin{align*}
&\Var (M)_{\infty
}\\
&\quad = |L|{\Eins }_{\{\gamma =0\}}+|F(0)|{\Eins }_{\{
\gamma >0\}} + \int \limits _{(0,\gamma )}\Bigl |\frac{dF}{dG}(s)
\Bigr |\,dG(s) + |L-F(\gamma -)|{\Eins }_{\{0<\gamma <\infty \}}
\\
&\quad \leqslant 2|L|{\Eins }_{\{\gamma <\infty \}}+2|F(0)|{
\Eins }_{\{\gamma >0\}} +2\int \limits _{(0,\gamma )}\Bigl |
\frac{dF}{dG}(s)\Bigr |\,dG(s)
\end{align*}
and
\begin{align*}
{\mathsf{E}}\Bigl (\int _{(0,\gamma )}\Bigl |\frac{dF}{dG}(s)\Bigr |\,dG(s)
\Bigr ) &=\int \limits _{[0,t_{G})} \int \limits _{(0,u)}\Bigl |
\frac{dF}{dG}(s)\Bigr |\,dG(s)\,dG(u)
\\
&= \int \limits _{[0,t_{G})} \Bigl |\frac{dF}{dG}(s)\Bigr |\int
\limits _{(s,t_{G})}\,dG(u)\,dG(s)
\\
&= \int \limits _{[0,t_{G})} {\overline{G}}(s)\Bigl |\frac{dF}{dG}(s)
\Bigr |\,dG(s)<\infty .\qedhere
\end{align*}
\end{proof}

\begin{remark}
It follows from the last equalities in the proof that, due to \eqref{eq:MF} and \eqref{eq:H++} respectively, \eqref{eq:H1} implies that
the following integrals are also finite:
\begin{equation*}
\int \limits _{[0,t_{G})}|F(s-)|\,dG(s)<\infty \qquad \text{and}\qquad
\int \limits _{[0,t_{G})}|H(s)|\,dG(s)<\infty .
\end{equation*}
However, it may happen that
\begin{equation*}
\int \limits _{[0,t_{G})}|F(s)|\,dG(s)=\infty ,
\end{equation*}
see an example in \cite[Remark 3.11]{HerdegenHerrmann:16}.
\end{remark}

\section{Complements}
\label{sec:5}

\subsection{Single jump processes\index{single jump ! process} and their compensators\index{compensator}}
\label{subsec:5.2}

Let us consider the same setting as in Section~\ref{sec:2} and let
$V$ be a finite random variable. For simplicity, we assume that
$\{\gamma =0\}\subseteq \{V=0\}$. Then
\begin{equation*}
X_{t} = V{\Eins }_{\{t\geqslant \gamma \}}
\end{equation*}
is an adapted process of finite variation\index{finite variation} on compact intervals.

\begin{lemma}%
\label{l:locint}
The process $X=(X_{t})_{\mathbb{R}_{+}}$ is of locally integrable variation
if and only if
%
\begin{equation}
\label{eq:Vintegrab}
{\mathsf{E}}\bigl (|V| {\Eins }_{\{\gamma \leqslant t\}}\bigr )<\infty ,
\quad t\in {\mathcal{T}}.
\end{equation}
\end{lemma}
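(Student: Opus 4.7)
The plan is to first compute the total variation process of $X$ explicitly, then verify each direction of the equivalence using the characterization of stopping times in Proposition~\ref{prop:1}(iv). Since $\{\gamma = 0\} \subseteq \{V = 0\}$, the paths of $X$ start at $0$ and have a single jump at $\gamma$ (on $\{\gamma < \infty\}$), so $\Var(X)_t = |V|\Eins_{\{\gamma \leqslant t\}}$. The question thus reduces to whether there exists a localizing sequence $T_n \uparrow \infty$ with $\mathsf{E}\bigl(|V|\Eins_{\{\gamma \leqslant T_n\}}\bigr) < \infty$ for each $n$.

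For sufficiency, I would dispose of Case B first: here $t_G \in \mathcal{T}$ and $\mathsf{P}(\gamma \leqslant t_G) = 1$, so the hypothesis at $t = t_G$ yields $\mathsf{E}|V| < \infty$ directly, meaning $X$ even has integrable total variation. In Case A, choose any sequence $t_n \uparrow t_G$ with $t_n \in \mathcal{T}$ and set $T_n = t_n$ on $\{\gamma > t_n\}$ and $T_n = +\infty$ otherwise, exactly as in the proof of Theorem~\ref{th:2}. These are stopping times by Proposition~\ref{prop:1}(iv). A direct check, using that $t_G < \infty$ in Case A forces $\mathsf{P}(\gamma = \infty) = 0$ and that in the subcase $t_G = \infty$ the sequence $t_n$ itself tends to $\infty$, shows $T_n \uparrow \infty$ a.s.; moreover $\{\gamma \leqslant T_n\} = \{\gamma \leqslant t_n\}$, which gives the required integrability.

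For necessity, I would take any localizing sequence $\{T_n\}$ with $\mathsf{E}\bigl(\Var(X)_{T_n}\bigr) < \infty$. If $\mathsf{P}(T_n \geqslant \gamma) = 1$ for some $n$, then $\mathsf{E}|V| < \infty$ and the conclusion is immediate. Otherwise, Proposition~\ref{prop:1}(iv) produces a number $r_n < t_G$ with $\{T_n < \gamma\} = \{r_n < \gamma\}$ and $T_n = r_n$ on that set. A brief set-theoretic argument gives $\{\gamma \leqslant T_n\} = \{\gamma \leqslant r_n\}$, so $\mathsf{E}\bigl(|V|\Eins_{\{\gamma \leqslant r_n\}}\bigr) < \infty$. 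Then, repeating the dichotomy from the end of the proof of Theorem~\ref{th:2}, the constraint $T_n \uparrow \infty$ rules out $\liminf r_n < t_G$: in Case B one gets $\mathsf{P}(T_n < \gamma) \geqslant \mathsf{P}(\gamma = t_G) > 0$, a contradiction; in Case A with $t_G = \infty$ one has $r_n \to \infty$; and in Case A with $t_G < \infty$ (hence $\mathsf{P}(\gamma = t_G) = 0$) one obtains $r_n \to t_G$. In every situation, for any fixed $t \in \mathcal{T}$ one has $\{\gamma \leqslant t\} \subseteq \{\gamma \leqslant r_n\}$ for all large $n$, whence \eqref{eq:Vintegrab}.

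The step I expect to be the most delicate is the necessity argument: one must carefully track which of the three subcases of the dichotomy (Case B, Case A with $t_G = \infty$, Case A with $t_G < \infty$) is operative, and verify in each that $T_n \uparrow \infty$ actually forces the auxiliary constants $r_n$ to exhaust $\mathcal{T}$; the rest of the proof reduces to bookkeeping with the indicator $\Eins_{\{\gamma \leqslant T_n\}}$.
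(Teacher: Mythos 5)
Your proof is correct and follows essentially the same route as the paper: sufficiency via the stopping times $T_n = t_n\Eins_{\{\gamma > t_n\}} + \infty\Eins_{\{\gamma \leqslant t_n\}}$ with $t_n \upuparrows t_G$ (Case B being handled directly since $t_G \in \mathcal{T}$ gives integrable variation), and necessity via Proposition~\ref{prop:1}(iv), the identity $\{\gamma \leqslant T_n\} = \{\gamma \leqslant r_n\}$, and the same dichotomy argument from the end of the proof of Theorem~\ref{th:2}. No gaps worth noting.
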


\begin{proof}
Let \eqref{eq:Vintegrab} hold. If $t_{G}\in {\mathcal{T}}$, then
$\mathsf{E}\bigl (|V| {\Eins }_{\{\gamma \leqslant t_{G}\}}\bigr )<
\infty $ means that the process $X$ itself has integrable variation. In
Case A, put
\begin{equation*}
T_{n} = \left \{
\begin{array}{ll}
t_{n}, & \text{if $\gamma >t_{n}$;}
\\
+\infty , & \text{otherwise.}
\end{array}
\right .
\end{equation*}
where $t_{n}\upuparrows t_{G}$. Then $T_{n}\uparrow \infty $ a.s.\ and
$\Var (X^{T_{n}})_{\infty }= |V|{\Eins }_{\{\gamma \leqslant T_{n}
\}}= |V|{\Eins }_{\{\gamma \leqslant t_{n}\}}$.

Conversely, let $\{T_{n}\}$ be a localizing sequence of stopping times\index{stopping time}
such that\break
$\mathsf{E}\bigl (|V|{\Eins }_{\{\gamma \leqslant T_{n}\}}\bigr )<
\infty $. If $\mathsf{P}(\gamma \leqslant T_{n})=1$ for $n$ large enough,
then $V$ is integrable. So assume that $\mathsf{P}(\gamma > T_{n})>0$ for
every $n$. By Proposition \ref{prop:1} (iv), there are numbers
$r_{n}$ such that
$\{T_{n}<\gamma \}=\{T_{n}=r_{n}<\gamma \}=\{r_{n}<\gamma \}$. Thus, we
have a sequence $r_{n}$ such that
$\mathsf{E}\bigl (|V|{\Eins }_{\{\gamma \leqslant r_{n}\}}\bigr )<
\infty $. Since $T_{n}\to \infty $ a.s.\ and the sequence
$\{T_{n}\}$ is increasing, in Case A it follows that
$r_{n}\uparrow t_{G}$, and in Case B we come to a contradiction by repeating
the arguments in the concluding part of the proof of Theorem~\ref{th:2}.
\end{proof}

From now on we will assume that $X$ is a process of locally integrable
variation, i.e. \eqref{eq:Vintegrab} holds. Our aim is to find its compensator.\index{compensator}
We can introduce a function $K$ similarly as the function $H$ is introduced
in \eqref{eq:Hdef1}:
%
\begin{equation}
\label{eq:Kdef1}
K(t)=\mathsf{E}[V|\gamma =t],\quad t\in {\mathcal{T}}.
\end{equation}
It is clear that $K\in L^{1}_{\mathrm{loc}}(dG)$ and $K(0)=0$ if
$\mathsf{P}(\gamma =0)>0$. Now define
%
\begin{equation}
\label{eq:Fdef1}
F(t) = \int \limits _{(0,t]} {\overline{G}}(s-)^{-1}K(s)\,dG(s),
\quad 0\leqslant t<t_{G},
\end{equation}
in particular, $F(0)=0$. It follows that, in Case B, the function
$F$ has a bounded variation on $[0,t_{G})$ and has a finite limit as
$t\upuparrows t_{G}$, so we put
%
\begin{equation}
\label{eq:FtG}
F(t_{G})=\lim _{t\upuparrows t_{G}}F(t).
\end{equation}

The next theorem takes its origin in \cite{Dellacherie:70}, where the case when
$V=1$, $\gamma $ is finite and $t_{G}=+\infty $ is considered.

\begin{thm}%
\label{th:incr}
Let $V$ be a random variable satisfying \eqref{eq:Vintegrab},
$K$ and $F$ defined in \eqref{eq:Kdef1}--\eqref{eq:FtG}. Then the compensator\index{compensator}
$A_{t}$ of the process
$X_{t} = V{\Eins }_{\{t\geqslant \gamma \}}$ is given by
\begin{equation*}
\label{eq:comp1}
A_{t} = F(t\wedge \gamma ) \quad \text{in Case A}
\end{equation*}
and
\begin{equation*}
\label{eq:comp2}
A_{t} = F(t\wedge \gamma ) + K(t_{G}){\Eins }_{\{\gamma
\geqslant t_{G}\}}{\Eins }_{\{t\geqslant t_{G}\}}\quad
\text{in Case B}.
\end{equation*}
\end{thm}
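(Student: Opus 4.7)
My plan is to verify the two defining properties of the compensator—predictability of $A$ and the local martingale property of $M := X - A$—so that uniqueness of the compensator identifies our process with $A$. Observe at the outset that $A_0 = 0$ by construction, which fixes the normalization.

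For predictability, note that on $\{t \leqslant \gamma\}$ the process $A_t$ coincides with the deterministic Borel function $F(t)$ (plus a Borel correction at $t_G$ in Case~B), in accordance with the necessary structure given by Proposition~\ref{prop:fv}(ii); on $\{t > \gamma\}$, $A_t$ is constant in $t$, equal to $F(\gamma)$. The only jumps of $A$ occur at atoms $s_0$ of $G$ (deterministic, hence predictable, times), with jump sizes $\Delta F(s_0)\,\Eins_{\{\gamma \geqslant s_0\}}$ that are $\mathscr{F}_{s_0-}$-measurable since $\{\gamma < s_0\} \in \mathscr{F}_{s_0-}$. Case~B contributes one further jump at the deterministic time $t_G$, of $\mathscr{F}_{t_G-}$-measurable size $K(t_G)\,\Eins_{\{\gamma \geqslant t_G\}}$. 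This suffices for $A$ to be predictable.

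For the martingale part I rewrite
\begin{equation*}
M_t = -F(t)\,\Eins_{\{t<\gamma\}} + L\,\Eins_{\{t\geqslant\gamma\}},
\end{equation*}
with $L = V - F(\gamma)$ in Case~A and $L = V - F(\gamma) - K(t_G)\,\Eins_{\{\gamma = t_G\}}$ in Case~B, so that Theorem~\ref{th:2} is directly applicable. Integrability \eqref{eq:integrab} follows from \eqref{eq:Vintegrab} together with the bound $|F(s)| \leqslant \overline{G}(t)^{-1}\int_{(0,t]}|K(u)|\,dG(u)$ valid for $s \leqslant t < t_G$ (replacing $\overline{G}(t)$ by $\overline{G}(t_G-) > 0$ at the right endpoint in Case~B). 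Relation \eqref{eq:main+} with $\mathsf{E}(M_0)=0$ (automatic since $F(0)=0$ and $V=0$ on $\{\gamma=0\}$) reduces, for $t < t_G$, to the deterministic identity
\begin{equation*}
F(t)\,\overline{G}(t) + \int_{(0,t]} F(s)\,dG(s) = \int_{(0,t]} K(s)\,dG(s),
\end{equation*}
which I would prove by applying integration by parts in the form $d(F\overline{G}) = \overline{G}(\cdot-)\,dF - F\,dG$ and then substituting the defining relation $dF(s) = \overline{G}(s-)^{-1}K(s)\,dG(s)$.

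The delicate point, and the raison d'{\^e}tre of the extra summand in Case~B, is the endpoint $t = t_G \in \mathcal{T}$. Passing to the limit $t \upuparrows t_G$ in the displayed identity and rewriting each open integral as one over $(0,t_G]$ (with the convention $F(t_G) := \lim_{t \upuparrows t_G} F(t)$ of \eqref{eq:FtG}) produces a residue: one computes $\int_{(0,t_G]} K(s)\,dG(s) - \int_{(0,t_G]} F(s)\,dG(s) = K(t_G)\,\overline{G}(t_G-)$. This surplus is compensated exactly by the contribution $-K(t_G)\,\overline{G}(t_G-)$ that the term $-K(t_G)\,\Eins_{\{\gamma = t_G\}}$ in $L$ makes to $\mathsf{E}[L\,\Eins_{\{\gamma \leqslant t_G\}}]$, restoring $\mathsf{E}(M_{t_G}) = 0$ and pinning down the corrective term $K(t_G)\,\Eins_{\{\gamma \geqslant t_G\}}\,\Eins_{\{t \geqslant t_G\}}$ in the formula for $A$.
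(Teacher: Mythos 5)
Your proof is correct, but it follows a somewhat different route than the paper. For the martingale part, the paper does not verify \eqref{eq:integrab}--\eqref{eq:main+} directly: it invokes Theorem~\ref{th:solutions} together with Proposition~\ref{prop:solutions}(b), computing from \eqref{eq:Fdef1} that the function $H$ associated with $F$ is $H(t)=F(t)-K(t)$ for $0<t<t_G$ (and $H(t_G)=F(t_G)$ in Case~B) and then checking that $\mathsf{E}[L|\gamma =t]=H(t)$; the endpoint bookkeeping in Case~B is thereby absorbed into Condition~M (namely \eqref{eq:mainB}) rather than carried out by hand. You instead go back to the more primitive Theorem~\ref{th:2}(iii) and verify the functional identity $F(t)\overline{G}(t)+\int _{(0,t]}F\,dG=\int _{(0,t]}K\,dG$ by integration by parts, with an explicit residue computation $\int _{(0,t_{G}]}(K-F)\,dG=K(t_{G})\overline{G}(t_{G}-)$ at the right endpoint that is cancelled by the extra term in $A$; this is more computational but makes transparent why the Case~B correction is exactly $K(t_{G}){\Eins }_{\{\gamma \geqslant t_{G}\}}{\Eins }_{\{t\geqslant t_{G}\}}$, and your limit argument and the bound $|F(s)|\leqslant \overline{G}(t)^{-1}\int _{(0,t]}|K|\,dG$ used for \eqref{eq:integrab} are both correct. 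The only step you should tighten is predictability: the criterion you appeal to (``jumps only at deterministic times with ${\mathscr{F}}_{s_{0}-}$-measurable sizes suffices'') is not a complete criterion as stated; either argue as the paper does, that $t\wedge \gamma $ is continuous and adapted, hence predictable, so the Borel composition $F(t\wedge \gamma )$ is predictable, and that in Case~B the set $\{\gamma \geqslant t_{G}\}\times [t_{G},\infty )$ is an intersection of predictable sets, or complete your jump argument by writing the purely discontinuous part of $A$ as the (absolutely convergent) sum over atoms $s_{0}$ of $G$ of the predictable processes $\Delta F(s_{0}){\Eins }_{\{\gamma \geqslant s_{0}\}}{\Eins }_{[s_{0},\infty )}(t)$ and noting that the continuous part is adapted and continuous, hence predictable.
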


\begin{proof}
The process $t\wedge \gamma $ is adapted and continuous, hence, it is predictable.
It follows that $F(t\wedge \gamma )$ is predictable. Next, in Case B, the
set
$\{(\omega ,t)\colon \gamma (\omega )\geqslant t_{G},\> t\geqslant t_{G}
\}$ coincides with the intersection of predictable sets\index{predictable sets}
\begin{equation*}
\{\gamma \geqslant t_{G}\}\times [t_{G},\infty ) = \bigcap _{n}
\Bigl [\{\gamma > t_{G}-n^{-1}\}\times (t_{G}-n^{-1},\infty )\Bigr ],
\end{equation*}
therefore, $A$ is predictable. Hence it is enough to show that
$M=A-X$ is a local martingale.\index{local martingale}

We use Theorem \ref{th:solutions} and Proposition \ref{prop:solutions} (b). $M$ has the representation \eqref{eq:mr2} with
the same function $F$ and
$L=F(\gamma ){\Eins }_{\{\gamma <\infty \}}-V{\Eins }_{\{
\gamma <\infty \}} +K(t_{G}){\Eins }_{\{\gamma = t_{G}<\infty \}}$.
Define the function $H$ as in Proposition \ref{prop:solutions} (b). Then
it follows from \eqref{eq:Fdef1} that $H(t)=F(t)-K(t)$, $0<t<t_{G}$, and,
in Case B, $H(t_{G})=F(t_{G})$. On the other hand, we have
$\mathsf{E}[L|\gamma =t] = F(t)-K(t)=H(t)$, $0<t<t_{G}$, and, in Case B,
$\mathsf{E}[L|\gamma =t_{G}] = F(t_{G})-K(t_{G})+K(t_{G})=F(t_{G})=H(t_{G})$.
The claim follows.
\end{proof}

\subsection{Example: submartingales\index{submartingale} of class $(\Sigma )$}
\label{subsec:5.1}

Recall, see \cite{Nikeghbali:06}, that a nonnegative submartingale
$X=(X_{t})_{t\in \mathbb{R}_{+}}$ is called a submartingale\index{submartingale} of class
$(\Sigma )$ if $X_{0}=0$ and it can be decomposed as
$X_{t}=N_{t}+A_{t}$, where $N=(N_{t})_{t\in \mathbb{R}_{+}}$,
$N_{0}=0$, is a local martingale,\index{local martingale} $A=(A_{t})_{t\in \mathbb{R}_{+}}$,
$A_{0}=0$, is a \emph{continuous\/} increasing process, and the measure
$(dA_{t})$ is carried by the set $\{t \colon X_{t} = 0\}$. A typical example
is a process $X_{t}= \overline{L}_{t} - L_{t}$ which is the difference between
the running maximum $\overline{L}_{t}$ of a continuous local martingale\index{local martingale}
$(L_{t})$ and $L_{t}$ itself.

Let $X=(X_{t})_{t\in \mathbb{R}_{+}}$ be a nonnegative submartingale with
the Doob--Meyer decomposition $X_{t}=N_{t}+A_{t}$, where
$N_{0}=A_{0}=0$, $N$ is a local martingale,\index{local martingale} $A$ is a predictable increasing
process. Assume that $A_{\infty }<\infty $ a.s.\ and put
$C_{t}=\inf \{s\geqslant 0\colon A_{s}>t\}$. Then, see
\cite[Lemma 3.1]{Gushchin:18}, $X$ is of class $(\Sigma )$ if and only
if a.s.
\begin{equation*}
A_{C_{t}}=A_{\infty }\wedge t\qquad \text{and}\qquad X_{C_{t}}=X_{\infty }{
\Eins }_{\{t\geqslant A_{\infty }\}},
\end{equation*}
where a finite limit $X_{\infty }:=\lim _{t\to \infty }X_{t}$ exists a.s.\ by
\cite[Proposition 3.1]{Gushchin:18}. Therefore, the process
$M_{t}=-N_{C_{t}}$ has the representation
\begin{equation*}
M_{t}=t\wedge \gamma -V{\Eins }_{\{t\geqslant \gamma \}}, \quad
\text{where}\ \gamma =A_{\infty }\ \text{and}\ V=X_{\infty }.
\end{equation*}
$M$ may not be a local martingale.\index{local martingale} For example, take
as $L$ a Brownian motion
stopped when it hits $1$ and define $X=\overline{L}-L$, then
$M_{t}=t\wedge 1$. However, if $X$ is a submartingale\index{submartingale} of class $(D)$ then
$N$ is a uniformly integrable martingale and $M$ is also a uniformly integrable
martingale (with respect to its own filtration and, by Theorem \ref{th:2}, with respect to the single jump\index{single jump ! filtration} filtration generated by
$\gamma $ on an original space). Now we can define a function $K$ according
to \eqref{eq:Kdef1} and conclude that \eqref{eq:Fdef1} is valid with
$F(t)=t$. We may interpret \eqref{eq:Fdef1} as the equation with known
$K$ and unknown $G$. This identity says that the Lebesgue measure on
$[0,t_{G})$ is absolutely continuous with respect to $dG$ but not vice
versa. However, if the function $K(t)$ does not vanish ($dG$-a.s.) then
we obtain from \eqref{eq:Fdef1} that $dG$ is equivalent to the Lebesgue
measure on ${\mathcal{T}}$, in particular, $G$ is continuous, and
\begin{equation*}
{\mathsf{P}}(\gamma >t)=\exp \Bigl (-\int \limits _{0}^{t} \frac{dt}{K(t)}
\Bigr ),\quad t<t_{G}.
\end{equation*}
This statement coincides with Theorem 4.1 in \cite{Nikeghbali:06}. If the
function $K(t)$ may vanish, analysis of equation \eqref{eq:Fdef1} with
$F(t)=t$, known $K(t)$ and unknown $G(t)$ is done in~\cite{Vallois:1994}.

A kind of a converse statement is proved in \cite{Gushchin:20}. If, say,
a martingale $M$\index{martingale} satisfies
\begin{equation*}
M_{t}=t\wedge \gamma -V{\Eins }_{\{t\geqslant \gamma \}},
\end{equation*}
where $\gamma <\infty $ and $V\geqslant 0$, then, using Monroe's theorem
\cite{Monroe:72}, we prove that there is a Brownian motion $B$ and a finite
stopping time\index{stopping time} $T$ such that, for the stopped process $L=B^{T}$, the joint
law of its terminal value $L_{\infty }$\index{terminal value} and its maximum
$\overline{L}_{\infty }$ coincides with that of $M$, that is, with the law
of $(\gamma -V,\gamma )$. In particular, this shows that a distribution
function $G$ is the law of the maximum of a uniformly integrable continuous
martingale\index{martingale} $L$ with $L_{0}=0$ if and only if, with $F(t)=t$,
$0\leqslant t<t_{G}$, we have $F\overset{\mathrm{loc}}{\ll }G$,
$\int _{[0,t_{G})} |H(s)|\,dG(s)<\infty $, where $H$ is defined by \eqref{eq:H}, and $G(t)=o(t^{-1})$,
see conditions for $M$ to have type
3 or 4 in Theorem~\ref{th:class}. This gives an alternative proof of the
main result in~\cite{Vallois:1994}.


\begin{acknowledgement}[title={Acknowledgments}]
We thank three anonymous referees for careful reading of the paper and
constructive comments and suggestions for improving the presentation. A
special thanks goes to the referee who suggested Theorem \ref{th:sigmamart}
on a characterisation of $\sigma $-martingales.
\end{acknowledgement}



\end{document}